\let\ams@starttoc\@starttoc
\let\@starttoc\ams@starttoc
\patchcmd{\@starttoc}{\makeatletter}{\makeatletter\parskip\z@}{}{}
\title[Categorification]{Categorification of the ring of cyclotomic integers for products of two primes}
\author{Djalal Mirmohades}
\email{math@djalal.com}
\newtheorem{theorem}{Theorem}[section]
\newtheorem{proposition}[theorem]{Proposition}
\newtheorem{corollary}[theorem]{Corollary}
\newtheorem{lemma}[theorem]{Lemma}
\theoremstyle{definition}
\newtheorem{definition}[theorem]{Definition}
\newcommand{\Z}{{\mathbb Z}}
\begin{document}
\begin{abstract}
Let $n$ be a product of two distinct prime numbers.
We construct a triangulated monoidal category having a Grothendieck ring isomorphic to the ring of $n$:th cyclotomic integers.
\end{abstract}

\maketitle
\tableofcontents

\section{Introduction}\label{intro}

The approach of categorification, which goes back to the work of Crane \cite{Cr} and Frenkel and Crane \cite{CF}, has in the recent years shown that it can be successfully applied to solve problems in various areas of mathematics, see for example \cite{Kh1,CR,MS}.
By now, many mathematical objects were ``categorified'' in different ways.
However, there are still some interesting objects which notoriously resist any sensible categorification.
One of such objects is the ring of $n$:th cyclotomic integers, whose categorification is only know in the case when $n$ is a power of a prime (or two times a power of prime), see \cite{Kh2,Qi,KQ}.
The interest in categorification of cyclotomic integers stems from the hope to categorify quantum groups 
at roots of unity as explained in \cite{Kh2}, while a categorification of quantum groups
in the generic case is well-known, see \cite{KL,Ro}.
In this paper we propose a categorification of the ring of $n$:th cyclotomic integers in the case when $n$ is the product of two distinct primes.
If $n$ is odd, then the ring of $n$:th cyclotomic integers is isomorphic to the ring of $2n$:th cyclotomic integers, so any categorification of the former ring also works for the latter ring.

Let us now discuss the content of the paper in more details.
A category consists of objects and arrows.
Decategorification of a category is about identifying similar objects and then throwing away the arrows. This produces a set. 
However, we want to do this in such a way that as much structure as possible passes over to the set. The loosely defined reverse process, to construct a category which decategorifies to some given mathematical structure, 
is called {\em categorification}, see \cite{Ma} for details.

As a first example, we consider $\mathbb{k}\!-\!\mathrm{mod}$, the category of finite-dimensional vector spaces over a field $\mathbb{k}$.
Already in its name we have a decategorification in mind, namely that its objects are finite-dimensional; linear algebra tells us that two vector spaces are isomorphic if and only if they have the same dimension, so the decategorification in question identifies isomorphic vector spaces by designating a natural number $\dim(X)$ to each vector space $X$.
In fact, some structure is preserved too; we have $\dim(X \oplus Y) = \dim(X) + \dim(Y)$ and $\dim(X \otimes Y) = \dim(X) \times \dim(Y)$ which means that $(\mathbb{N},+ , \times, 0, 1)$ is a decategorification of $(\mathbb{k}\!-\!\mathrm{mod}, \oplus, \otimes, 0, \mathbb{k})$.

Our next example is the Euler characteristic.
It is a decategorification of the category of topological spaces taking value in the set of integers.
However, Euler characteristic does not always carry enough information about a topological space while the topological space itself on the other hand can be very complicated to work with.
So topologist invented something in-between which eventually developed into a field of its own, known as homological algebra.
At its core there is an algebraic category, called \textit{the homotopy category of chain complexes}, here denoted $H_2\!-\!\underline{\mathrm{zmod}}$.
This category carries strictly more information than the Euler characteristic while at the same time being easier to work with than topological spaces.
In particular, the Euler characteristic factors trough $H_2\!-\!\underline{\mathrm{zmod}}$, this means that $H_2\!-\!\underline{\mathrm{zmod}}$ is a categorification of $\Z$.
Objects in $H_2\!-\!\underline{\mathrm{zmod}}$, called chain complexes, are pairs $(X, d)$ consisting of a $\Z$-graded vector space $X$ together with a morphism $d$, called differential, of degree $1$ that satisfies $d^2 = 0$.

We shall work with generalizations of this category denoted $H_N\!-\!\underline{\mathrm{zmod}}$, consisting of objects called $N$-complexes, which are defined in the same way as chain complexes with the only difference being that the differential satisfies $d^N = 0$ instead of $d^2 = 0$.
These categories do not provide new topological invariants (see \cite{Sp}) but they do provide categorifications for other rings than $\Z$.

The subject of $N$-complexes have attracted some interest since Kapranovs paper \cite{Ka} from 1991, see for example \cite{Ab,BHN,Bi,CSW,D1,D2,D3,DH,DK,Es,He,IKM,KW,M2,Ta,Wa}.
The subject shares many properties with classical homological algebra (i.e.\ when $N = 2$) and one reason it does so can be given in the framework of ``Hopfological Algebra'', developed by Khovanov in \cite{Kh2} and Qi in \cite{Qi}.
Our work relies on the following theorem which serves as foundation for Hopfological algebra.
\begin{theorem}[Khovanov]
Any finite-dimensional Hopf algebra $H$ is Frobenius and the stable category of $H$-modules is triangulated monoidal.
\end{theorem}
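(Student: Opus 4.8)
The plan is to establish the two assertions separately and then combine them. For the first, I would follow the classical argument of Larson and Sweedler showing that a finite-dimensional Hopf algebra is Frobenius by means of the theory of integrals. For the second, I would invoke Happel's theorem that the stable category of a self-injective algebra is triangulated, and then equip it with a monoidal structure coming from the comultiplication of $H$, verifying that this structure descends from $H$-modules to the stable quotient and is compatible with the triangulation.

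First I would recall that a finite-dimensional algebra $A$ over a field $\mathbb{k}$ is Frobenius precisely when there is a linear functional $\lambda \colon A \to \mathbb{k}$ whose kernel contains no nonzero left ideal, equivalently when $A \cong A^*$ as left $A$-modules. To produce such a functional for $H$, I would use the space of left integrals, that is the elements $\Lambda \in H$ with $h\Lambda = \varepsilon(h)\Lambda$ for all $h \in H$, together with the dual notion inside $H^*$. The Fundamental Theorem of Hopf Modules shows this space is one-dimensional; fixing a nonzero integral $\lambda \in H^*$ and using the antipode $S$, one forms the associative bilinear pairing $\langle a, b\rangle = \lambda(ab)$ and checks that it is nondegenerate. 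This exhibits $H$ as a Frobenius, hence self-injective, algebra, so that projective and injective $H$-modules coincide.

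Next I would pass to the stable category $H\text{-}\underline{\mathrm{mod}}$, whose objects are the finite-dimensional $H$-modules and whose morphisms are module maps modulo those factoring through a projective-injective. Since $H$ is self-injective, the category of $H$-modules is a Frobenius exact category, so by Happel's theorem its stable category is triangulated, with shift the cosyzygy functor $\Omega^{-1}$ and distinguished triangles arising from short exact sequences. For the monoidal structure I would use the comultiplication $\Delta$ to define the $H$-action on $M \otimes_{\mathbb{k}} N$, with the trivial module $\mathbb{k}$, via the counit $\varepsilon$, as the unit object. The crucial point is that this product descends to the stable category, which requires that $P \otimes_{\mathbb{k}} M$ and $M \otimes_{\mathbb{k}} P$ be projective whenever $P$ is; here the full Hopf structure, rather than mere Frobenius-ness, is needed, since one reduces to $P = H$ and uses that $H \otimes_{\mathbb{k}} M$ is free via the antipode and comultiplication. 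Finally I would confirm that tensoring with a fixed object is exact, so the associativity and unit constraints pass to the quotient and the product is compatible with the triangulation.

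The main obstacle, I expect, lies in this second part rather than in the classical Frobenius statement. One must verify with care that tensoring with a projective yields a projective, so that the tensor product is well-defined on $H\text{-}\underline{\mathrm{mod}}$, and that the resulting product interacts correctly with the shift $\Omega^{-1}$ and with cones, so that no obstruction arises when the constraints descend to the stable quotient. Establishing that tensoring is exact and preserves the class of projective-injectives is exactly what simultaneously makes the monoidal structure descend and renders it compatible with the triangulated structure.
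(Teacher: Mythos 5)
Your proposal is correct and takes essentially the same route as the proof this paper relies on: the theorem is stated here as background without proof, deferring entirely to Khovanov \cite{Kh2} and Qi \cite{Qi}, and the argument in those sources is exactly the one you outline (Larson--Sweedler integrals for the Frobenius property, Happel's theorem for the triangulation of the stable category, and descent of the monoidal structure via the fact that $H \otimes_{\mathbb{k}} M$ is free, cf.\ \cite[Proposition 2.1]{Qi}, which is also the fact this paper invokes later in Proposition \ref{grothendieck_quotient}). You correctly identify the crux as the preservation of projectives under tensoring rather than the classical Frobenius statement.
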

But before we continue, we need to give a proper definition of decategorification in various algebraic cases.
All categories we consider here are essentially small.

Given an additive category $\mathcal{C}$, its \textit{split Grothendieck group} is defined as the abelian group generated by isomorphism classes $[X]$ of objects $X \in \mathcal{C}$ subject to relations $[X] + [Z] = [Y]$ for each split short exact sequence $0 \to X \to Y \to Z \to 0$.
In this group, $[X \oplus Z] = [X] + [Z]$ but there are no representatives for additive inverses, that is if $[A] + [B] = 0$ then $[A] = [B] = 0$.

Given a triangulated category $\mathcal{C}$, its Grothendieck group, denoted $K_0(\mathcal{C})$, is defined as the abelian group generated by isomorphism classes $[X]$ of objects $X \in \mathcal{C}$ subject to relations $[Y] - [X] = [Z]$ for each triangle $X \to Y \to Z \to X[1]$.
In this group, additive inverses do have representatives because, for any object $X$, there is a triangle $X \to 0 \to X[1] \to X[1]$.

There is an explanation for this difference in behavior regarding additive inverses; the structure of the split Grothendieck group can be viewed as a decategorification of direct sum $\oplus : \mathcal{C} \times \mathcal{C} \to \mathcal{C}$, which actually produces a monoid operation. We made the monoid into a group by introducing formal inverses. 

A triangulated category on the other hand has a structure of triangulation which decategorifies to a group structure all by itself.
The binary group operation then comes from the ``mapping cone'' axiom which decategorifies into subtraction or division.
The Verdier axiom decategorifies to the following axiom 
$$\forall x, y, z : (z-x)-(y-x) = z-y$$ 
and the degenerate triangles to identity axioms 
$$\forall x: x-x = 0, x-0 = x.$$
Any set with an element $0$ and binary operation ``$-$'' that satisfies the axioms above is then a group defined in terms of subtraction.
There is in fact a surprising connection between $N$-complexes and triangulation; it turns out that the simplicial relations coming from $N$-complexes, for all natural numbers $N$, categorifies part of a triangulated structure on the homotopy category which, in turn, categorifies group structure (as described above).
This puts $N$-complexes in a very different context than being mere generalizations of chains complexes, in that context, they provide intrinsic structure on the category of chain complexes which descends to triangulation in the homotopy category.
We refer the reader to \cite{M1} for details.

A triangulated monoidal category $\mathcal{C}$ is a triangulated category having a monoidal structure which is exact in both arguments.
Its Grothendieck ring, also denoted $K_0(\mathcal{C})$, is defined in the same way as the Grothendieck group, but with the additional relation that $[X \otimes Y] = [X] [Y]$.

The approach proposed in this paper generalizes to arbitrary $n$, however, this requires more technical 
work which will be done in a subsequent paper.

\section{\texorpdfstring{$N$}{N}-complexes}\label{n-complexes}

Let $n$ be a positive integer and $\mathbb{k}$ a field of characteristic zero, or positive characteristic which does not divide $n$, containing a primitive $n$:th root of unity $q$.
That is $q \in \mathbb{k}$ satisfies $\Phi_n(q) = 0$ where $\Phi_n$ is the $n$:th cyclotomic polynomial.
The \textit{Taft Hopf algebra}, denoted $H_n$, is given by the associative non-commutative algebra
\begin{equation}\label{H}
H_n = \mathbb{k}\left<k, d\right>\big/(k^n - 1, d^n, dk - qkd)
\end{equation}
with comultiplication $\Delta(k) = k \otimes k$, $\Delta(d) = d \otimes 1 + k \otimes d$, counit $\epsilon(k) = 1$, $\epsilon(d) = 0$ and antipode $S(k) = k^{n-1} = k^{-1}$, $S(d) = -q^{-1}d$.

Let $G_n$ denote the Hopf subalgebra of $H_n$ generated by the group-like elements.
Then $G_n$ can be described as
\begin{equation}\label{G}
G_n = \mathbb{k}[k]\big/(k^n - 1)
\end{equation}
with comultiplication $\Delta(k) = k \otimes k$, counit $\epsilon(k) = 1$ and antipode $S(k) = k^{n-1} = k^{-1}$.
The Hopf algebra $G_n$ is also known as the group Hopf algebra of the cyclic group with $n$ elements over $\mathbb{k}$.

Let $H_n\!-\!\mathrm{mod}$ denote the category of finite-dimensional left $H_n$-modules.
Objects $M$ of $H_n\!-\!\mathrm{mod}$ can be regarded as left $G_n$-modules.
This means that the polynomial $k^n - 1 = \prod_{i=0}^{n-1}(k - q^i)$ acts as zero on $M$.
Thus $M$ decomposes into a direct sum of eigenspaces $M_i$ corresponding to eigenvalues $q^i$.
Let the \textit{implicit grading} on $M$ denote the $\Z\big/(n)$-grading so that $M_i$ has degree $i$ modulo $n$.
By construction $k$ has implicit degree zero. The relation $dk = qkd$ implies that $d$ has implicit degree $-1$ modulo $n$.

Now regard $H_n$ as a graded Hopf algebra with $k$ having degree $0$ and $d$ degree $1$.
Let $H_n\!-\!\mathrm{gmod}$ denote the category of $\Z$-graded finite-dimensional left $H_n$-modules.
We call this grading the \textit{explicit grading}.

For an object of $H_n\!-\!\mathrm{gmod}$, define the \textit{total grading} as
$$\text{total grading} := \text{explicit grading} + \text{implicit grading} \pmod{n}.$$
The action of $k$ and $d$ has explicit $\times$ implicit degree $(0,\overline{0})$ and $(1,\overline{-1})$ respectively, so both of them has an action of total degree equal to zero modulo $n$.
This means that given an object $M$ of $H_n\!-\!\mathrm{gmod}$, its homogeneous total degree zero part $M_{\overline{0}}$ is a submodule of $M$.
Define the \textit{homological grading} on $M_{\overline{0}}$ as the $\Z$-grading given by the kernel of $+$:
\[\xymatrix@R=1em{
& \text{\small homological} & \text{\small expl.} \times \text{\small impl.} & \text{\small total} & \\
0 \ar[r] & \Z \ar[r]
& \Z \times \Z\big/(n) \ar[r]^(.55)+ & \Z\big/(n) \ar[r] & 0 .\\
& i \ar@{|->}[r] & (i, \overline{-i}) \ar@{|->}[r] & \overline{0} \\
}\]
In other words, a homogeneous element in $M_{\overline{0}}$ which has explicit degree $i$ will have implicit degree $\overline{-i} = -i + (n)$ and we define its homological degree to be $i$.
The implicit grading and hence the action of $k$ can then be inferred from the homological grading; for a homogeneous element $x$ of homological degree $i$, we have $kx = q^{-i}x$.
This means that the action of $k$ on $M_{\overline{0}}$ carries no information and may as well be forgotten without any loss of information.
In other words, the module $M_{\overline{0}}$ can be regarded as a $\Z$-graded $\mathbb{k}[d]\big/(d^7)$-module, with $d$ having degree $1$.
This makes $M_{\overline{0}}$ a $7$-complex (See definition of $N$-complexes in \cite{Ka}).

The following diagram illustrates the explicit $\times$ implicit $\Z \times \Z\big/(7)$-grading of objects in $H_7-\mathrm{gmod}$ as the dotted coordinate system:

\vspace{20mm}

\hspace{11mm}
\begin{xy} 
\xybox{0;<4mm, 0mm>:<0mm,10mm>::
{\xypolygon7"1"{~={0}~>{} \bullet}}, {\ellipse(1){.}}, +(3.3,0),
{\xypolygon7"2"{~={0}~>{} \bullet}}, {\ellipse(1){.}}, +(3.3,0),
{\xypolygon7"3"{~={0}~>{} \bullet}}, {\ellipse(1){.}}, +(3.3,0),
{\xypolygon7"4"{~={0}~>{} \bullet}}, {\ellipse(1){.}}, +(3.3,0),
{\xypolygon7"5"{~={0}~>{} \bullet}}, {\ellipse(1){.}}, +(3.3,0),
{\xypolygon7"6"{~={0}~>{} \bullet}}, {\ellipse(1){.}}, +(3.3,0),
{\xypolygon7"7"{~={0}~>{} \bullet}}, {\ellipse(1){.}}, +(3.3,0),
{\xypolygon7"8"{~={0}~>{} \bullet}}, {\ellipse(1){.}},
"11";"22"**@{-}*\dir{>},"11";"81"**@{.},
"22";"33"**@{-}*\dir{>},"12";"82"**@{.},
"33";"44"**@{-}*\dir{>},"13";"83"**@{.},
"44";"55"**@{-}*\dir{>},"14";"84"**@{.},
"55";"66"**@{-}*\dir{>},"15";"85"**@{.},
"66";"77"**@{-}*\dir{>},"16";"86"**@{.},
"77";"81"**@{-}*\dir{>},"17";"87"**@{.},
}
\end{xy}
\vspace{1ex}

The differential $d \in H_7$ acts in parallel to the spiral. 
Since there are $7$ spirals parallel to the one above, every module $M$ decomposes into a direct sum of $7$ submodules $M_{\overline{0}} \oplus \cdots \oplus M_{\overline{6}}$ indexed by its homogeneous total degree modulo $7$.

Because there are no non-zero morphisms between modules of distinct total degree, the entire category $H_n\!-\!\mathrm{gmod}$ splits into a direct sum 
$$H_n\!-\!\mathrm{gmod} = \bigoplus_{i = 0}^{n-1} H_n\!-\!\mathrm{gmod}_{\overline{i}}$$
where $H_n\!-\!\mathrm{gmod}_{\overline{i}}$ denotes the homogeneous subcategory of total degree $i$ modulo $n$.
The tensor product in $H_n\!-\!\mathrm{gmod}$ is additive in both explicit and implicit grading, this implies additivity in total and homological grading as well.
Thus, we may think of the tensor product inherited from $H_n\!-\!\mathrm{gmod}$ as a bi-additive functor
$$\otimes : H_n\!-\!\mathrm{gmod}_{\overline{i}} \times H_n\!-\!\mathrm{gmod}_{\overline{j}} \longrightarrow H_n\!-\!\mathrm{gmod}_{\overline{i+j}}.$$
This means in particular that the subcategory $H_n\!-\!\mathrm{gmod}_{\overline{0}}$ is endowed with a tensor product.
Since there are no non-zero morphisms between different summands $H_n\!-\!\mathrm{gmod}_{\overline{i}}$, this decomposition carries over to a decomposition $H_n\!-\!\underline{\mathrm{gmod}}_{\overline{i}}$ of the stable category $H_n\!-\!\underline{\mathrm{gmod}}$.
A definition of the stable category can be found in \cite[Section 4--6]{Ke} for the general Frobenius case or \cite[Section 2]{Qi} for the hopfological case.
Let
\begin{align}\label{definition_zmod}
\begin{array}{r@{}l}
H_n\!-\!\mathrm{zmod} &{}:= H_n\!-\!\mathrm{gmod}_{\overline{0}}, \\
H_n\!-\!\underline{\mathrm{zmod}} &{}:= H_n\!-\!\underline{\mathrm{gmod}}_{\overline{0}}.
\end{array}
\end{align}

\begin{proposition}
The category $H_N\!-\!\mathrm{zmod}$ is monoidally isomorphic to the category of finite-dimensional $\Z$-graded $N$-complexes (defined by Kapranov in \cite{Ka}) of vector spaces over the field $\mathbb{k}$.
\end{proposition}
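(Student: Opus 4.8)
The plan is to exhibit the isomorphism concretely through a pair of mutually inverse functors and then to match the two tensor products on the nose. First I would define the forgetful functor $F$ from $H_N\!-\!\mathrm{zmod}$ to the category of finite-dimensional $\Z$-graded $N$-complexes: send an object $M$ of total degree $\overline{0}$ to its underlying explicitly graded vector space together with the degree-$(+1)$ operator $d$, which forms an $N$-complex because the relation $d^N = 0$ holds in $H_N$. On arrows $F$ merely forgets $H_N$-linearity down to graded-linear maps commuting with $d$. Conversely I would build a candidate inverse $G$ sending a $\Z$-graded $N$-complex $(V,d)$ to the same space equipped with the $k$-action $kx = q^{-i}x$ for $x$ homogeneous of explicit degree $i$. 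A short check shows the Taft relations then hold automatically: $k^N x = (q^N)^{-i}x = x$, and $dk\,x = q^{-i}\,dx = q\cdot q^{-(i+1)}dx = qkd\,x$ since $dx$ sits in degree $i+1$; moreover $(V,d)$ lands in the total-degree-$\overline{0}$ sector exactly because a degree-$i$ element is forced into the $q^{-i}$-eigenspace of $k$.

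These two functors are mutually inverse, and this gives an isomorphism of the underlying additive categories rather than a mere equivalence. The point is that for any object of $H_N\!-\!\mathrm{zmod}$ the $k$-action is already \emph{forced} to be $G$'s formula: within the total-degree-$\overline{0}$ sector, the explicit-degree-$i$ part of $M$ must have implicit degree $\overline{-i}$ (any other implicit degree would contribute nonzero total degree), so $k$ acts as $q^{-i}$ on all of $M_i$. Hence $G\circ F$ and $F\circ G$ are the identity on objects, and on morphisms the two categories share literally the same hom-sets, since commuting with a grading-determined $k$ is vacuous for a graded map.

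It remains to verify that the isomorphism is monoidal, which is where the real content lies. I would compare the two differentials on a tensor product directly. On the $H_N$ side the differential on $M \otimes M'$ is dictated by the comultiplication $\Delta(d) = d \otimes 1 + k \otimes d$, so for $x$ of explicit degree $i$ one computes $d(x \otimes y) = dx \otimes y + (kx)\otimes dy = dx \otimes y + q^{-i}\,x \otimes dy$. This is precisely Kapranov's $q$-deformed Leibniz rule defining the tensor product of $N$-complexes, up to replacing $q$ by $q^{-1}$, which is harmless since $q^{-1}$ is again a primitive $N$-th root of unity. The main obstacle is the bookkeeping around coherence: one must confirm that the unit object $\mathbb{k}$ (trivial module via $\epsilon$, placed in explicit degree $0$, which indeed has total degree $\overline{0}$) and the associativity and unit constraints inherited from $H_N\!-\!\mathrm{gmod}$ coincide strictly with the ones Kapranov imposes on $N$-complexes. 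Once the two differentials on the tensor product are identified, these constraints transport automatically because both are induced from the ambient tensor product of graded vector spaces; I would then simply record that $d^N=0$ on the tensor product follows on either side from the vanishing of the Gaussian binomial coefficients $\binom{N}{j}_{q}$ for $0 < j < N$, so that $F$ upgrades to a strict monoidal isomorphism.
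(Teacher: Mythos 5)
Your proposal is correct and follows essentially the same route as the paper: the paper's proof is simply ``follows from the discussion above,'' and that discussion is exactly what you reconstruct --- the $k$-action on a total-degree-$\overline{0}$ module is forced by the grading (so forgetting it loses nothing and yields a genuine isomorphism of categories, not just an equivalence), while the tensor product is matched through $\Delta(d) = d \otimes 1 + k \otimes d$, which reproduces Kapranov's $q$-deformed Leibniz rule. Your write-up is in fact more detailed than the paper's, notably in making explicit the $q$ versus $q^{-1}$ convention issue and the Gaussian-binomial vanishing behind $d^N = 0$ on tensor products, both of which the paper leaves implicit.
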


\begin{proof}
Follows from the discussion above.
\end{proof}

Analogously to the above, let $H_n \otimes H_m\!-\!\mathrm{gmod}$ denote the category of $\Z \times \Z$-graded left $H_n \otimes H_m$-modules, where we require $n$ and $m$ to be relatively prime.
The category has two explicit as well as two implicit gradings.
Its total degree lives in $\Z\big/(n) \! \times \! \Z\big/(m)$, hence it induces a $\Z \! \times \! \Z$ homological grading (i.e.\ bigrading) in the part of total degree $(\overline{0}, \overline{0})$.
We denote the full subcategory of $H_n \! \otimes \! H_m \!-\!\mathrm{gmod}$ of total degree $(\overline{0}, \overline{0})$ by $H_n \! \otimes \! H_m \!-\!\mathrm{zmod}$.

Since $H_n \otimes H_m$ is a Hopf algebra, the category $H_n \! \otimes \! H_m \!-\!\mathrm{gmod}$ has a tensor product.
Just as in the case of $H_n\!-\!\mathrm{zmod}$, the category $H_n \! \otimes \! H_m \!-\!\mathrm{zmod}$ inherits a tensor product from $H_n \! \otimes \! H_m \!-\!\mathrm{gmod}$.
This then descends to a exact monoidal structure on the triangulated stable category $H_n \! \otimes \! H_m \!-\!\underline{\mathrm{zmod}}$.

The whole purpose of reformulating $N$-complexes in terms of a subcategory of graded Hopf algebra modules for a finite-dimensional Hopf algebra is to enter the domain of ``Hopfological algebra''.
This framework provides a triangulated and exact monoidal structure on the stable category $H_n \! \otimes \! H_m \!-\!\underline{\mathrm{zmod}}$.
Hopfological algebra was initiated by Khovanov in \cite{Kh2} and Qi in \cite{Qi} where proof of the triangulated monoidal structure of can be found.

It is worth noting that a description of $N$-complexes in terms of Hopf algebra modules was given by Bichon in \cite{Bi} already in 2003 which in turn generalizes a result for the case $N = 2$ by Pareigis in \cite{Pa} from 1981.
However, this description does not fit in the framework of Hopfological algebra because the Hopf algebras in question are infinite-dimensional.

\section{A Sum of Two Ideals}

Tensor the embedding of Hopf algebras $G_n \to H_n$ (described in Section \ref{n-complexes}) with $H_m$ to get a Hopf algebra morphism
$G_n \otimes H_m \to H_n \otimes H_m$.
This induces the exact restriction functor
$$P_0 : H_n \! \otimes \! H_m \!-\!\mathrm{zmod} \longrightarrow G_n \! \otimes \! H_m \!-\!\mathrm{zmod}.$$
The functor $P_0$ can be described in terms of a tensor product
$$P_0 \simeq {}_{(G_n \otimes H_m)}{F(0, 0)}_{(H_n \otimes H_m)} \otimes \_$$
or in terms of a hom-functor
$$P_0 \simeq \operatorname{Hom}_{{(H_n \otimes H_m)}}\left({}_{(H_n \otimes H_m)}{F(0, 0)}_{(G_n \otimes H_m)}, \_ \right)$$
restricted to the subcategory $H_n \! \otimes \! H_m \!-\!\mathrm{zmod}$, where $F(0, 0)$ is the graded free \mbox{$H_n \! \otimes \! H_m$}-module generated by one generator in explicit degree $(0, 0)$.
This description works because tensor and hom with $F(0, 0)$ preserves total degree.
But if a functor preserves total degree, then so must any of its adjoints.
By the usual tensor-hom adjunction, these descriptions then provide an associated right adjoint and a left adjoint to $P_0$, respectively.
The functor $P_0$ descends to an exact functor of triangulated categories
$$\underline{P_0} : H_n \! \otimes \! H_m \!-\!\underline{\mathrm{zmod}} \longrightarrow G_n \! \otimes \! H_m \!-\!\underline{\mathrm{zmod}}$$
because $P_0$ preserve tensor product and projective objects.
Analogously, $P_1$ and $\underline{P_1}$ denotes the exact functors corresponding to the Hopf algebra embedding $H_n \otimes G_m \to H_n \otimes H_m$.
\[\xymatrix@C=0mm{
& H_n \! \otimes \! H_m \!-\!\mathrm{zmod} \ar[dl]_{P_0} \ar[dr]^{P_1} & \\
G_n \! \otimes \! H_m \!-\!\mathrm{zmod} & & H_n \! \otimes \! G_m \!-\!\mathrm{zmod}\\
}\]
We shall give an explicit construction of the parts we need and leave re rest to the reader.
These are the construction of right adjoint 
$$R_0 : G_n \! \otimes \! H_m \!-\!\mathrm{zmod} \longrightarrow H_n \! \otimes \! H_m \!-\!\mathrm{zmod}$$
described (only) on objects, together with the unit of this adjunction.

\begin{definition}
The functor $R_0$ is defined on objects $Y$ by first describing $R_0 Y$ as in terms of a $G_n \! \otimes \! H_m$-module, then defining the action of the remaining generator $d \otimes 1$, which turns it into an $H_n \! \otimes \! H_m$-module.
So let $Y$ be an object in $G_n \! \otimes \! H_m \!-\!\underline{\mathrm{zmod}}$ and define
$$R_0 Y := Y \oplus Y\{1, 0\} \oplus \cdots \oplus Y\{n-2, 0\} \oplus Y\{n-1, 0\}$$
where $Y\{i, j \}$ denotes a shift given by $Y\{i, j \}^{(a,b)} = Y^{(a+i,b+j)}$ in the homological degree, as indicated by upper indices. while keeping the total degree unchanged.
This is achieved by a shift of $(i, j)$ in the explicit degree together with a shift of $(-i, -j)$ modulo $n\Z \times m\Z$ in the implicit degree.
The remaining generator $d_0 = d \otimes 1$ of $H_n \! \otimes \! H_m$ acts on $R_0 Y$ as:
$$d_0 (y_0, y_1, \dots, y_{n-2}, y_{n-1}) := (y_1, y_2, \dots, y_{n-1}, 0).$$
We need to check that the action of $d_0$ satisfies the relations given in \ref{H}. We obviously have $d_0^n = 0$. 
It remains to check that $d_0 k_0 y = q_0 k_0 d_0 y$ where $q_0$ is equal to $q$ of $H_n$, $k_0 = k \otimes 1$ and $y \in R_0 Y$.
Because of the shift in the implicit degree, $k_0$ acts as:
$$k_0 (y_0, y_1, \dots, y_{n-2}, y_{n-1}) := (k_0 y_0, q_0 k_0 y_1, \dots, q_0^{n-2} k_0 y_{n-2}, q_0^{n-1} k_0 y_{n-1})$$
written in terms of non-shifted coordinates.
Hence we have
$$(q_0 k_0 y_1, q_0^2 k_0 y_2, \dots, q_0^{n-1} k_0 y_{n-1}, 0) = q_0 (k_0 y_1, q_0 k_0 y_2, \dots, q_0^{n-2} k_0 y_{n-1}, 0)$$
which shows that $d_0 k_0 y = q_0 k_0 d_0 y$.
Now if $Y$ is a zero object in the stable category $G_n \! \otimes \! H_m \!-\!\underline{\mathrm{zmod}}$, then $Y$ is isomorphic to a direct sum of modules of the form $\mathbb{k} \otimes H_m \{i, j\}$ and each summand is mapped by $R_0$ to a free module, which is a zero object in $H_n \! \otimes \! H_m \!-\!\underline{\mathrm{zmod}}$.
\end{definition}

\begin{definition}
For every object $X$ in the category $H_n \! \otimes \! H_m \!-\!\mathrm{zmod}$, define the monomorphism $\eta_X : X \to R_0 P_0 X$ as:
\[\xymatrix{
x \ar@{|->}[r] & (x, d_0 x, \dots, d_0^{n-2} x, d_0^{n-1} x) .\\
}\]
It is clear that $\eta_X$ preserves the action of $1 \otimes k$ and $1 \otimes d$.
We first check that $\eta_X$ preserves the action of $k_0 = k \otimes 1$:
\[\xymatrix@R=1ex{
x \ar@{|->}[r]^(.3){\eta_X} \ar@{|->}[dddd]_{k_0} & (x, d_0 x, \dots, d_0^{n-2} x, d_0^{n-1} x) \ar@{|->}[ddd]^{k_0} \\ \\ \\
& (k_0 x, q_0 k_0 d_0 x, \dots, q_0^{n-2} k_0 d_0^{n-2} x, q_0^{n-1} k_0 d_0^{n-1} x) \ar@{}[d]|{=} \\
k_0 x \ar@{|->}[r]_(.3){\eta_X} & (k_0 x, d_0 k_0 x, \dots, d_0^{n-2} k_0 x, d_0^{n-1} k_0 x)\\
}\]
which commutes because $d_0 k_0 = q_0 k_0 d_0$.
Next, we check that $\eta_X$ preserves the action of $d_0 = d \otimes 1$:
\[\xymatrix{
x \ar@{|->}[r]^(.25){\eta_X} \ar@{|->}[d]_{d_0} & (x, d_0 x, \dots, d_0^{n-2} x, d_0^{n-1} x) \ar@{|->}[d]^{d_0} \\
d_0 x \ar@{|->}[r]_(.25){\eta_X} & (d_0 x, d_0^2 x, \dots, d_0^{n-1} x, 0)\\
}\]
which commutes because $d_0^n = 0$.
\end{definition}

\begin{lemma}\label{factors}
Let $Y$ be an object in the kernel of the functor 
$$\underline{P_1} : H_n \! \otimes \! H_m \!-\!\underline{\mathrm{zmod}} \longrightarrow H_n \! \otimes \! G_m \!-\!\underline{\mathrm{zmod}}.$$
Then, for any object $X$ and morphism $f : X \to Y$ in $H_n \! \otimes \! H_m \!-\!\mathrm{zmod}$, we have that $f$ factors through $\eta_X$:
\[\xymatrix{
& R_0 P_0 X \ar@{-->}[dr]^\exists & \\
X \ar[ur]^{\eta_X} \ar[rr]_f & & Y .\\
}\]
\end{lemma}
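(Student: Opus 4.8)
The plan is to produce the dashed arrow as a genuine morphism $g\colon R_0 P_0 X \to Y$ with $g\circ\eta_X = f$, the essential input being a contracting homotopy for $Y$ along the differential $d_0 = d\otimes 1$. Unwinding the hypothesis, $Y\in\ker\underline{P_1}$ says precisely that $P_1 Y$ is projective over $H_n\otimes G_m$; since $G_m$ is semisimple this is equivalent to saying that, viewed through $d_0$ alone, $Y$ is a contractible $n$-complex. Concretely, there is a $\mathbb{k}$-linear map $s\colon Y\to Y$ of homological bidegree $(1-n,0)$ satisfying the $n$-complex contraction identity $\sum_{i=0}^{n-1} d_0^{\,i}\, s\, d_0^{\,n-1-i} = \mathrm{id}_Y$; the prescribed bidegree is exactly what will later make $g$ interact correctly with $k_0$. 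Because the $k_1$-eigenspace decomposition of $Y$ is preserved by $d_0$, this $s$ may moreover be chosen to commute with $k_1$.

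Granting such an $s$, I would set $h := s\circ f\colon X\to Y$ and define $g$ as its unique $d_0$-linear extension from the top summand of $R_0 P_0 X = \bigoplus_{i}(P_0 X)\{i,0\}$, namely $g(x_0,\dots,x_{n-1}) := \sum_{i=0}^{n-1} d_0^{\,n-1-i}\, s\, f(x_i)$. A direct check shows $g$ commutes with $d_0$: applying $d_0$ inside and outside produces the same sum after an index shift, the extra $i=0$ term dropping out because $d_0^{\,n}=0$. Commutation with $k_0$ holds because the factor $q_0$ produced when $k_0$ is moved past $d_0^{\,n-1-i}$ is compensated by the bidegree $(1-n,0)$ of $s$, and commutation with $k_1$ is inherited from the $k_1$-linearity of $s$ and $f$. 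The factorization identity then follows immediately from the contraction identity: using $f(d_0^{\,i}x)=d_0^{\,i}f(x)$ one computes $g(\eta_X(x)) = \sum_i d_0^{\,n-1-i}\, s\, d_0^{\,i} f(x) = \bigl(\sum_i d_0^{\,i}\, s\, d_0^{\,n-1-i}\bigr)f(x) = f(x)$ after reindexing.

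The one remaining, and genuinely delicate, point — the hard part of the proof — is that $g$ must also commute with $d_1 = 1\otimes d$, which reduces to requiring that $s$ commute with $d_1$. Here the contraction handed to us by $(H_n\otimes G_m)$-projectivity is of no direct help, since $d_1\notin H_n\otimes G_m$ and the splitting witnessing that projectivity is blind to the second differential; one must upgrade the $k_1$-linear $d_0$-contraction $s$ to one that in addition commutes with $d_1$. I would achieve this by a perturbation argument: writing $\delta := d_1 s - s d_1$, the commutativity of $d_0$ and $d_1$ forces $\delta$ to be closed for the operator $\partial(u):=\sum_i d_0^{\,i}\, u\, d_0^{\,n-1-i}$ on endomorphisms, while contractibility of $Y$ makes the endomorphism $n$-complex of $Y$ contractible and hence $\partial$-acyclic, so $\delta$ admits a $\partial$-primitive by which $s$ can be corrected. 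Organizing this correction by the order of vanishing in $d_1$ and using $d_1^{\,m}=0$ guarantees termination after finitely many steps, yielding a contraction that commutes with both $k_1$ and $d_1$. Equivalently, this is the standard zig-zag in the double complex of $k_1$-linear endomorphisms of $Y$ carrying the two commuting differentials induced by $d_0$ and $d_1$, expressing the bidegree-$(0,0)$ total cycle $\mathrm{id}_Y$ as a $d_0$-boundary of a $d_1$-cycle. With such an $s$ in hand, $g$ is a morphism in $H_n\otimes H_m\!-\!\mathrm{zmod}$ and the factorization is complete.
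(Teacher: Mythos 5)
Your first two paragraphs are fine and correctly isolate the crux: everything comes down to making the contraction $d_1$-equivariant. But the claim on which your third paragraph rests --- that every $Y \in \ker \underline{P_1}$ admits a $d_0$-contraction commuting with $d_1$ --- is false, so no perturbation scheme can produce one. Take $n = 2$, $m = 3$, let $R = \mathbb{k}[d_1]\big/(d_1^3)$, and let $Y$ be the object of $H_2 \! \otimes \! H_3 \!-\!\mathrm{zmod}$ given by the exact sequence of $R$-modules
$$0 \longrightarrow \operatorname{soc} R \longrightarrow R \longrightarrow R\big/\operatorname{soc} R \longrightarrow 0$$
placed in $d_0$-degrees $0,1,2$, with $d_0$ the two maps of the sequence, $d_1$ acting by multiplication, and the bigrading placing $\operatorname{soc}R$ in bidegree $(0,2)$, $R$ in bidegrees $(1,0),(1,1),(1,2)$ and the quotient in $(2,0),(2,1)$. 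Forgetting $d_1$, this is an exact, hence contractible, complex of graded vector spaces, so $P_1 Y$ is projective and $Y \in \ker\underline{P_1}$. Yet a contraction $s$ commuting with $d_1$ would, in top $d_0$-degree, be an $R$-linear section of the projection $R \to R\big/\operatorname{soc} R$ (since $d_0$ vanishes on the top degree, the identity $s d_0 + d_0 s = \mathrm{id}$ reduces there to $d_0 \circ s = \mathrm{id}$), and no such section exists because $R$ is indecomposable as a module over itself. This pinpoints the error in your zig-zag: contractibility of $Y$ over $\mathbb{k}$ gives acyclicity of the full endomorphism complex, but the $\partial$-primitive of $\delta = d_1 s - s d_1$ obtained there need not be $d_1$-equivariant; the genuine obstruction lives in the homology of the subcomplex of $d_1$-equivariant endomorphisms, and in the example above it is nonzero.

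The comparison with the paper is instructive. Because your $g$ is uniform in $X$ (it is the $d_0$-free extension of $s f$), your strategy is at least as strong as the Lemma for $X = Y$, $f = \mathrm{id}_Y$; conversely, a factorization $g \eta_Y = \mathrm{id}_Y$ is, by freeness of $R_0 P_0 Y$ over $\mathbb{k}[d_0]\big/(d_0^n)$ on its last component, exactly an equivariant contraction. The example therefore shows not only that your route fails, but that the statement cannot hold for arbitrary $X$. The paper's proof proceeds quite differently --- by downward induction on the $d_0$-degree of $X$, choosing at each step a morphism $g_j : R_0 X^j \to Y$ with prescribed values on $\eta_X(X^j)$ --- and its step ``we may choose a morphism $g_b$'' is itself a lifting problem along the surjection $d_0^{n-1} : Y \to \operatorname{ker} d_0$ of $G_n \! \otimes \! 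H_m$-modules, i.e.\ it faces exactly the equivariance issue you identified. That lift does exist whenever each $X^j$ is projective over $G_n \otimes H_m$, which holds in the only case where the Lemma is subsequently used, namely $X \in \ker \underline{P_0}$ in Theorem \ref{orthogonal} (there $P_0 X$ is projective and the $X^j$ are its direct summands). So the difficulty has to be resolved by projectivity on the $X$ side, where the application supplies it; on the $Y$ side, where you try to resolve it, the hypothesis $Y \in \ker\underline{P_1}$ is genuinely too weak.
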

\begin{proof}
Note that the category $H_n \! \otimes \! H_m \!-\!\mathrm{zmod}$ has the same objects as its stable version, so the formulation of the proposition makes sense.
In this proof, we only deal with the first component of the homological grading (the $H_n$-component), which we simply call grading (or degree). As before, $d_0 = d \otimes 1$.

In the category $G_n \! \otimes \! H_m$ we have $P_0 X = \bigoplus_i X^i$, where $X^i$ has homogeneous degree $i$.
Then $R_0 P_0 X = R_0 \left( \bigoplus_i \! X^i \right) = \bigoplus_i R_0 X^i$.
Let $I$ be a section of $P_0$ such that $I M$ is the unique $H_n \! \otimes \! H_m$-module that satisfies $P_0 I M = M$ and $d_0 I M = 0$.
For each $i$, we then have $\eta_{I X^i} I X^i = d_0^{n-1} R_0 P_0 I X^i = d_0^{n-1} R_0 X^i$ as depicted here:
\[\xymatrix{
& & & & X^i \ar[d]^{\eta} \\
X^i \ar[r]^{d_0} & X^i \ar[r]^{d_0} & \cdots \ar[r]^{d_0} & X^i \ar[r]^{d_0} & X^i.
}\]

We shall now construct a morphism $g : R_0 P_0 X \to Y$ that satisfies $g \eta_X = f$, by induction on one summand $g_i : R_0 X^i \to Y$ at a time.

We begin with the highest degree $b$ at which $X^b$ is non-zero.
If $x \in X$ is a homogeneous element of degree $b$, then $d_0 x = 0$, which implies $d_0 f x = f d_0 x = 0$.
But 
$$f x \in \operatorname{ker} d_0 \Longrightarrow f x \in \operatorname{im} d_0^{m-1}$$
because $Y$ lies in the kernel of $\underline{P_1}$.
This means we may choose a morphism $g_b : R_0 X^b \to Y$ such that 
$$g_b \eta_X x = f x$$
for all $x$ of homogeneous degree $b$. This was the basis of the induction.

Assume we have constructed $g_i$ for all $i > j$ in such a way that the sum $s_j = \sum_{i=j+1}^b g_i$ satisfies $s_j \eta_X x = f x$ for every $x$ of homogeneous degree greater than $j$.
It remains to construct a $g_j : R_0 X^j \to Y$ which takes care of degree $j$.

But we already have $f d_0 x = s_j \eta_X d_0 x$ for every $x$ of homogeneous degree $j$ because $d_0 x$ has degree $j+1$.
This implies $0 = (f - s_j \eta_X) d_0 x = d_0 (f - s_j \eta_X) x$.
Just like above, we can then choose a morphism $g_j : R_0 X^j \to Y$ such that 
$$g_j \eta_X x = (f-s_j \eta_X)x$$
for all $x$ of homogeneous degree $j$. That means $f x = (s_j + g_j)\eta_X x$.

This induction terminates because $X$ is finite-dimensional.
\end{proof}

Similarly to Lemma \ref{factors}, if $Y$ is an object in the kernel of the functor 
$$\underline{P_0} : H_n \! \otimes \! H_m \!-\!\underline{\mathrm{zmod}} \longrightarrow G_n \! \otimes \! H_m \!-\!\underline{\mathrm{zmod}},$$
then for any object $X$ and morphism $f : X \to Y$ in $H_n \! \otimes \! H_m \!-\!\mathrm{zmod}$, $f$ factors through $\eta_X$:
\[\xymatrix{
& R_1 P_1 X \ar@{-->}[dr]^\exists & \\
X \ar[ur]^{\eta_X} \ar[rr]_f & & Y ,\\
}\]
where $\eta_X$ is defined in the same way as before.

\begin{proposition}\label{thick_kernel}
The ideals $\ker \underline{P_0}$ and $\ker \underline{P_1}$ of $H_n \! \otimes \! H_m \!-\!\underline{\mathrm{zmod}}$ are thick.
\end{proposition}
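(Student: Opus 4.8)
The plan is to deduce the statement from the general principle that the kernel of an exact monoidal functor between triangulated monoidal categories is automatically a thick tensor ideal; the substantive work has already been carried out in establishing that $\underline{P_0}$ and $\underline{P_1}$ are exact and preserve the tensor product. I would verify in turn the closure properties that make up "thick ideal" for $\ker \underline{P_0}$, the argument for $\ker \underline{P_1}$ being verbatim the same: closure under the shift functor, the two-out-of-three property for distinguished triangles, closure under direct summands, and closure under tensoring with arbitrary objects.

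First I would record that $\ker \underline{P_0}$ is a full subcategory closed under shifts: since $\underline{P_0}$ is exact it commutes with suspension, so $\underline{P_0}(X[1]) \cong (\underline{P_0} X)[1]$, which is a zero object whenever $\underline{P_0} X$ is. For the two-out-of-three property I would take a distinguished triangle $X \to Y \to Z \to X[1]$ with two of its three vertices in $\ker \underline{P_0}$ and apply the exact functor $\underline{P_0}$ to obtain a distinguished triangle in $G_n \otimes H_m\!-\!\underline{\mathrm{zmod}}$ in which two vertices vanish; since in any triangulated category a triangle with two zero vertices has its third vertex isomorphic to the zero object, the remaining vertex lies in $\ker \underline{P_0}$ as well.

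Closure under direct summands is the point at which \emph{thick} goes beyond \emph{triangulated}, so I would treat it separately, although it is again a formal consequence of additivity: if $Z \cong X \oplus X'$ with $\underline{P_0} Z \cong 0$, then $\underline{P_0} X \oplus \underline{P_0} X' \cong \underline{P_0} Z \cong 0$, and in any additive category a summand of a zero object is itself zero, because the identity morphism of the summand factors through $0$ and is therefore itself $0$. Hence $\underline{P_0} X \cong 0$ and $X \in \ker \underline{P_0}$. Finally, to see that the subcategory is a tensor ideal I would invoke monoidality of the restriction functor $\underline{P_0}$: for $X \in \ker \underline{P_0}$ and any object $W$ one has $\underline{P_0}(X \otimes W) \cong \underline{P_0} X \otimes \underline{P_0} W \cong 0$, so $X \otimes W \in \ker \underline{P_0}$.

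I do not expect a genuine obstacle: once exactness and monoidality of $\underline{P_0}$ and $\underline{P_1}$ are granted, thickness is formal. The only points meriting a moment's care are that the descended functor genuinely intertwines the suspension and that "vanishing" is read as isomorphism to the zero object of the stable category rather than literal equality, both of which hold because $\underline{P_0}$ descends from the exact, tensor- and projective-preserving functor $P_0$. It is worth remarking that Lemma \ref{factors} and its $\underline{P_0}$-analogue give a complementary structural fact: any morphism from an object of $\ker \underline{P_0}$ to an object of $\ker \underline{P_1}$ factors through $R_0 P_0 X$, which is a zero object precisely when $X \in \ker \underline{P_0}$, so every such morphism vanishes. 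This semiorthogonality between the two ideals is not needed for thickness itself, but is presumably the lever for the subsequent Grothendieck-ring computation.
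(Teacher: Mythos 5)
Your proposal is correct and takes essentially the same approach as the paper: the paper's proof is a one-line appeal to the standard fact (cited from Krause, Subsection 4.5) that the kernel of an exact functor of triangulated categories is thick, and your argument simply verifies that fact by hand from the exactness of $\underline{P_0}$ and $\underline{P_1}$. Your extra check of the tensor-ideal property via monoidality of the restriction functors is a harmless supplement to what the paper leaves implicit in calling the kernels ``ideals.''
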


\begin{proof}
This follows from the fact that the functors $\underline{P_0}$ and $\underline{P_1}$ are exact (see \cite[Subsection 4.5]{Kr}).
\end{proof}

\begin{theorem}\label{orthogonal}
The ideals $\ker \underline{P_0}$ and $\ker \underline{P_1}$ of $H_n \! \otimes \! H_m \!-\!\underline{\mathrm{zmod}}$ are orthogonal to each other, that is any morphism from an object in one of the subcategories to an object in the other is equal to zero.
\end{theorem}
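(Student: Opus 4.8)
The plan is to reduce the statement to a single factorization argument and then exploit the interaction between the two adjunctions. By the evident symmetry in the two tensor factors (interchanging the roles of $P_0$ and $P_1$), it suffices to show that any morphism $f : X \to Y$ in $H_n \! \otimes \! H_m \!-\!\underline{\mathrm{zmod}}$ with $X \in \ker \underline{P_0}$ and $Y \in \ker \underline{P_1}$ is zero; the opposite direction is then obtained symmetrically, using the factorization through $R_1 P_1$ recorded right after Lemma \ref{factors}.

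First I would lift $f$ to an honest morphism in the module category $H_n \! \otimes \! H_m \!-\!\mathrm{zmod}$, which is legitimate because the stable category has the same objects and every stable morphism is represented by a genuine module map (a stable morphism being zero precisely when such a representative factors through a projective object). Since $Y$ lies in $\ker \underline{P_1}$, Lemma \ref{factors} applies directly and produces a factorization $f = g \circ \eta_X$ through the unit $\eta_X : X \to R_0 P_0 X$.

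The key step is then to observe that the intermediate object $R_0 P_0 X$ is projective. Indeed, $X \in \ker \underline{P_0}$ means exactly that $P_0 X$ is a zero object of the stable category $G_n \! \otimes \! H_m \!-\!\underline{\mathrm{zmod}}$; by the computation in the definition of $R_0$, such a module is a direct sum of shifts of $\mathbb{k} \otimes H_m$, each of which $R_0$ carries to a free, hence projective, $H_n \! \otimes \! H_m$-module. Since $H_n \! \otimes \! H_m$ is a finite-dimensional Hopf algebra and therefore Frobenius, projective objects coincide with injective objects and are exactly the zero objects of the stable category, so $R_0 P_0 X$ is stably trivial.

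Finally, a morphism that factors through a stably trivial object vanishes in the stable category: passing $f = g \circ \eta_X$ down to $H_n \! \otimes \! H_m \!-\!\underline{\mathrm{zmod}}$ gives $\underline{f} = \underline{g} \circ \underline{\eta_X}$, which factors through $R_0 P_0 X \cong 0$ and hence is zero, as desired. I expect the only real subtlety to be bookkeeping: checking that the hypothesis of Lemma \ref{factors} (namely $Y \in \ker \underline{P_1}$) and the hypothesis making $R_0 P_0 X$ projective (namely $X \in \ker \underline{P_0}$) pair with the correct adjunction, $R_0 P_0$ rather than $R_1 P_1$, so that the factoring object is genuinely projective and the two kernels play complementary roles.
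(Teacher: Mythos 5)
Your proposal is correct and follows essentially the same route as the paper's own proof: factor $f$ through $\eta_X : X \to R_0 P_0 X$ via Lemma \ref{factors} (using $Y \in \ker \underline{P_1}$), observe that $X \in \ker \underline{P_0}$ forces $R_0 P_0 X$ to be a direct sum of free modules and hence stably trivial, conclude $f = 0$ in the stable category, and handle the reverse direction by symmetry through $R_1 P_1$. Your version merely makes explicit the projectivity argument that the paper compresses into the isomorphism $\underline{R_0}\,\underline{P_0} X \simeq \underline{R_0} 0 \simeq 0$.
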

\begin{proof}
Assume $X$ and $Y$ lie in the kernel of $\underline{P_0}$ and $\underline{P_1}$, respectively.
Then we have isomorphisms $\underline{R_0} \: \underline{P_0} X \simeq \underline{R_0} 0 \simeq 0$ in $H_n \! \otimes \! H_m \!-\!\underline{\mathrm{zmod}}$.
But since any $f : X \to Y$ factors trough $R_0 P_0 X = \underline{R_0} \: \underline{P_0} X$ by Lemma \ref{factors}, we have $f = 0$ in the stable category.
The proof that morphisms $g : Y \to X$ are equal to zero is similar to the above except that $P_0$ and $P_1$ have swapped roles to let $g$ factor trough $R_1 P_1 Y$.
\end{proof}

\begin{corollary}
The intersection $\ker \underline{P_0} \cap \ker \underline{P_1}$ is zero.
\end{corollary}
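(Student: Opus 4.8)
The plan is to derive this statement directly from the orthogonality established in Theorem \ref{orthogonal}, so that no new computation with modules is needed. The key observation is that an object lying in the intersection $\ker \underline{P_0} \cap \ker \underline{P_1}$ is simultaneously an object of the first ideal and of the second; consequently its identity endomorphism is a morphism of exactly the type that orthogonality forces to vanish.

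Concretely, I would first take an arbitrary object $X$ in $\ker \underline{P_0} \cap \ker \underline{P_1}$. Since $X$ lies in $\ker \underline{P_0}$ and also in $\ker \underline{P_1}$, the identity morphism $\mathrm{id}_X : X \to X$ is a morphism from an object of $\ker \underline{P_0}$ to an object of $\ker \underline{P_1}$. Theorem \ref{orthogonal} then yields $\mathrm{id}_X = 0$ in the stable category $H_n \! \otimes \! H_m \!-\!\underline{\mathrm{zmod}}$.

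Second, I would invoke the elementary fact that in an additive category an object whose identity endomorphism is zero is a zero object: for any $f : A \to X$ one has $f = \mathrm{id}_X \circ f = 0$, and dually for morphisms out of $X$, so $X$ is both initial and terminal. Since $H_n \! \otimes \! H_m \!-\!\underline{\mathrm{zmod}}$ is additive (indeed triangulated), this applies and gives $X \cong 0$, proving that the intersection is zero. There is essentially no obstacle here, as all the content sits in Theorem \ref{orthogonal}; the only points requiring a moment's care are that the identity of an object in the intersection genuinely counts as a morphism between the two ideals — which it does, precisely because the object belongs to both — and that $\mathrm{id}_X = 0$ forces $X$ to be a zero object in the ambient additive category.
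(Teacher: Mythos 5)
Your proof is correct and follows exactly the paper's approach: the paper's own proof is the single sentence ``The subcategories are orthogonal by the preceding theorem,'' and your argument simply spells out the implicit details (the identity of an object in the intersection is a morphism between the two ideals, hence vanishes, hence the object is zero in the additive stable category). Nothing to add.
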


\begin{proof}
The subcategories are orthogonal by the preceding theorem.
\end{proof}

\begin{corollary}\label{thick_ideal}
The sum of ideals $\ker \underline{P_0} + \ker \underline{P_1}$ of $H_n \! \otimes \! H_m \!-\!\underline{\mathrm{zmod}}$ is thick.
\end{corollary}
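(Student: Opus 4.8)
The plan is to leverage the orthogonality established in Theorem \ref{orthogonal} to give a completely explicit description of the sum and then check the axioms of a thick ideal on that description. Write $\mathcal{A} = \ker \underline{P_0}$ and $\mathcal{B} = \ker \underline{P_1}$; by Proposition \ref{thick_kernel} both are thick ideals, and by Theorem \ref{orthogonal} they are mutually orthogonal. The sum $\mathcal{A} + \mathcal{B}$ is the subcategory of objects $Z$ sitting in a triangle $A \to Z \to B \to A[1]$ with $A \in \mathcal{A}$ and $B \in \mathcal{B}$. The first thing I would observe is that every such triangle \emph{splits}: the connecting morphism $B \to A[1]$ goes from an object of $\mathcal{B}$ to an object of $\mathcal{A}$ (since $\mathcal{A}$ is closed under the shift $[1]$), hence vanishes by Theorem \ref{orthogonal}. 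Therefore $Z \cong A \oplus B$, and $\mathcal{A} + \mathcal{B}$ is exactly the full subcategory of objects isomorphic to $A \oplus B$ with $A \in \mathcal{A}$, $B \in \mathcal{B}$, i.e.\ the essential image of the additive functor $\oplus : \mathcal{A} \times \mathcal{B} \to H_n \! \otimes \! H_m \!-\!\underline{\mathrm{zmod}}$.

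With this description in hand, I would verify the structural closure properties one at a time. Closure under shifts is immediate, since $(A \oplus B)[1] = A[1] \oplus B[1]$. For closure under cones, orthogonality forces every morphism $A_1 \oplus B_1 \to A_2 \oplus B_2$ to be block-diagonal $f_A \oplus f_B$, because its off-diagonal components lie in $\operatorname{Hom}(\mathcal{A},\mathcal{B})$ and $\operatorname{Hom}(\mathcal{B},\mathcal{A})$ and so vanish; consequently $\operatorname{Cone}(f) \cong \operatorname{Cone}(f_A) \oplus \operatorname{Cone}(f_B)$ lands back in $\mathcal{A} + \mathcal{B}$ because $\mathcal{A}$ and $\mathcal{B}$ are triangulated. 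The ideal property is equally routine: $(A \oplus B) \otimes W \cong (A \otimes W) \oplus (B \otimes W)$ remains in the sum since $\mathcal{A}$ and $\mathcal{B}$ are ideals.

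The genuinely nontrivial point, and the step I expect to be the main obstacle, is closure under direct summands — precisely the property that fails for an arbitrary sum of two thick subcategories. Here I would argue as follows: if $W$ is a summand of $Z = A \oplus B$, given by an idempotent $e \in \operatorname{End}(Z)$, then orthogonality again makes $e$ block-diagonal, $e = e_A \oplus e_B$ with idempotents $e_A \in \operatorname{End}(A)$ and $e_B \in \operatorname{End}(B)$. The key input is that the ambient category is idempotent complete: since $H_n \otimes H_m$ is a finite-dimensional, hence Frobenius, Hopf algebra, the stable category $H_n \! \otimes \! H_m \!-\!\underline{\mathrm{zmod}}$ is Krull--Schmidt and therefore idempotent complete. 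Thus $e_A$ and $e_B$ split, and by thickness of $\mathcal{A}$ and $\mathcal{B}$ their images satisfy $\operatorname{im}(e_A) \in \mathcal{A}$ and $\operatorname{im}(e_B) \in \mathcal{B}$; hence $W \cong \operatorname{im}(e_A) \oplus \operatorname{im}(e_B) \in \mathcal{A} + \mathcal{B}$.

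The cleanest way to package the whole argument is to note that orthogonality makes $\oplus : \mathcal{A} \times \mathcal{B} \to H_n \! \otimes \! H_m \!-\!\underline{\mathrm{zmod}}$ fully faithful (the cross Hom-terms vanish, so $\operatorname{Hom}(A_1 \oplus B_1, A_2 \oplus B_2) = \operatorname{Hom}(A_1,A_2) \oplus \operatorname{Hom}(B_1,B_2)$) and exact (cones computed componentwise). Its source $\mathcal{A} \times \mathcal{B}$ is an idempotent-complete triangulated category, so the essential image is automatically closed under summands, and this is where the real work sits — everything else is formal bookkeeping that rides on orthogonality. I would therefore present Steps 1--2 briefly and devote the care to the summand-closure, making explicit the appeal to the Krull--Schmidt property of the stable module category and to the thickness of the two kernels from Proposition \ref{thick_kernel}.
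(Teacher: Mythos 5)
Your proposal is correct and follows essentially the same route as the paper: orthogonality (Theorem \ref{orthogonal}) forces every morphism between objects of the sum to be block-diagonal $f_0 \oplus f_1$, so closure under shifts and cones is checked componentwise, exactly as in the paper's proof. The one divergence is that the paper dispatches closure under direct summands in a single line (``both ideals are thick, hence their sum is closed under direct summands''), whereas you correctly identify this as the genuinely nontrivial point --- false for sums of thick subcategories in general --- and supply the argument the paper leaves implicit: block-diagonal idempotents split because the stable category of a finite-dimensional Frobenius Hopf algebra is Krull--Schmidt, and their images land in $\ker \underline{P_0}$ and $\ker \underline{P_1}$ by Proposition \ref{thick_kernel}; in this respect your write-up is more careful than the paper's.
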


\begin{proof}
We need to show that the sum is a full triangulated subcategory, closed under direct summands.
Both of the ideals $\ker \underline{P_0}$ and $\ker \underline{P_1}$ are thick by Proposition \ref{thick_kernel}.
Hence their sum is a full subcategory closed under direct summands.

It remains to check that it is closed under the shift functor and the ``mapping cone'' axiom (that any morphism can be extended to a triangle).
The shift functor is additive which proves closeness, but so is the ``mapping cone'' axiom:
Every morphism $f$ in $\ker \underline{P_0} + \ker \underline{P_1}$ can be written as a direct sum $f_0 \oplus f_1$ where $f_i \in \ker \underline{P_i}$.
Consequently, any triangle beginning with $f$ also decomposes into a direct sum of two triangles, one summand for each subcategory.
\end{proof}

These nice properties of the ideals $\ker \underline{P_0}$ and $\ker \underline{P_1}$ are a consequence of the $\Z \! \times \! \Z$-grading (homological bi-grading) on $H_n \! \otimes \! H_m \!-\!\underline{\mathrm{zmod}}$.
In the category $H_n \! \otimes \! H_m \!-\!\underline{\mathrm{mod}}$, where we only have $\Z\big/(n) \! \times \! \Z\big/(m)$-grading (implicit bi-grading), things are more complicated.
For example we have the following 15-dimensional object in $H_3 \! \otimes \! H_5 \!-\! \underline{\mathrm{mod}}$

\vspace{-2mm}
\[\xymatrix@R=5mm@C=5mm{
\mathbb{k} & \mathbb{k} \ar[r] \ar[d] & \mathbb{k} \ar[r] \ar[d] & \mathbb{k} \ar[r] \ar[d] & \mathbb{k} \ar@/_5mm/[llll] \ar[d] \\
\mathbb{k} \ar[r] \ar[d] & \mathbb{k} \ar[r] \ar[d] & \mathbb{k} \ar[r] \ar[d] & \mathbb{k} \ar[r] \ar[d] & \mathbb{k} \ar[d] \\
\mathbb{k} \ar[r] \ar@/^5mm/[uu] & \mathbb{k} \ar[r] & \mathbb{k} \ar[r] & \mathbb{k} \ar[r] & \mathbb{k}\\
}\]
where the drawn vertical and horizontal arrows tell where the action of $d_0$ and $d_1$, respectively, is the identity action.
This is a non-zero object in $H_3 \! \otimes \! H_5 \!-\!\underline{\mathrm{mod}}$ because it not isomorphic to (any shift of) the free module $H_3 \! \otimes \! H_5$ in $H_3 \! \otimes \! H_5 \!-\!\mathrm{mod}$.
If we unwrap the above diagram, we get:

\[\xymatrix@R=5mm@C=5mm{
{\mbox{\tiny 0}} & & & & & & & & & & \mathbb{k} \ar[d] \ar[r] & \mathbb{k} \ar[d] \ar[r] & \cdots \\
{\mbox{\tiny 1}} & & & & & & & & & \mathbb{k} \ar[d] \ar[r] & \mathbb{k} \ar[d] \ar[r] & \mathbb{k} \ar[d] \ar[r] & \cdots \\
{\mbox{\tiny 2}} & & & & & & & & & \mathbb{k} \ar[d] \ar[r] & \mathbb{k} \ar[r] & \mathbb{k} \ar[r] & \cdots \\
{\mbox{\tiny 0}} & & & & & \mathbb{k} \ar[r] \ar[d] & \mathbb{k} \ar[r] \ar[d] & \mathbb{k} \ar[r] \ar[d] & \mathbb{k} \ar[r] \ar[d] & \mathbb{k} \\
{\mbox{\tiny 1}} & & & & \mathbb{k} \ar[r] \ar[d] & \mathbb{k} \ar[r] \ar[d] & \mathbb{k} \ar[r] \ar[d] & \mathbb{k} \ar[r] \ar[d] & \mathbb{k} \ar[d] \\
{\mbox{\tiny 2}} & & & & \mathbb{k} \ar[r] \ar[d] & \mathbb{k} \ar[r] & \mathbb{k} \ar[r] & \mathbb{k} \ar[r] & \mathbb{k} \\
{\mbox{\tiny 0}} & \cdots \ar[r] & \mathbb{k} \ar[r] \ar[d] & \mathbb{k} \ar[r] \ar[d] & \mathbb{k} \\
{\mbox{\tiny 1}} & \cdots \ar[r] & \mathbb{k} \ar[r] \ar[d] & \mathbb{k} \ar[d] \\
{\mbox{\tiny 2}} & \cdots \ar[r] & \mathbb{k} \ar[r] & \mathbb{k} \\
& & {\mbox{\tiny 3}} & {\mbox{\tiny 4}} & {\mbox{\tiny 0}} & {\mbox{\tiny 1}} & {\mbox{\tiny 2}} & {\mbox{\tiny 3}} & {\mbox{\tiny 4}} & {\mbox{\tiny 0}} & {\mbox{\tiny 1}} & {\mbox{\tiny 2}} \\
}\]
where the grading is modulo $3$ vertically and modulo $5$ horizontally.
The unwrapped picture makes it easy to see that the actions of $d_0$ and $d_1$ commute and that this non-zero object lies in both of the ideals $\ker \underline{P_0}$ and $\ker \underline{P_1}$ of $H_3 \! \otimes \! H_5 \!-\!\underline{\mathrm{mod}}$.
This implies that the two ideals are not orthogonal in this setting.
\pagebreak

\section{Grothendieck Groups and Rings}

See Section \ref{intro} for definition and properties of Grothendieck Groups and Rings.

\begin{proposition}\label{grothendieck_quotient}
Let $\mathcal{H}$ denote the category $H\!-\!\mathrm{mod}$, $H\!-\!\mathrm{gmod}$ or $H\!-\!\mathrm{zmod}$ where $H$ is a finite-dimensional, in the latter cases graded, Hopf algebra.
Let $I$ be the subgroup of the Grothendieck ring $K_0(\mathcal{H})$ generated by the classes of projective objects of $\mathcal{H}$.
Then $I$ is an ideal in $K_0(\mathcal{H})$ and the Grothendieck ring of the stable category $\underline{\mathcal{H}}$ can be described as follows:
$$K_0(\underline{\mathcal{H}}) \simeq K_0(\mathcal{H})\big/ I.$$
\end{proposition}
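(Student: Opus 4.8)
The plan is to realize both sides through the standard theory of Frobenius categories. By the theorem of Khovanov quoted in Section~\ref{intro}, the finite-dimensional Hopf algebra $H$ is Frobenius, so in each of the three cases $\mathcal{H}$ is a Frobenius (abelian) category in which projective and injective objects coincide; its stable category $\underline{\mathcal{H}}$ then carries the triangulated structure recalled in \cite[Section 4--6]{Ke}. Throughout I read $K_0(\mathcal{H})$ as the Grothendieck ring of the abelian category, generated by isomorphism classes with the relation $[Y]=[X]+[Z]$ for every short exact sequence $0 \to X \to Y \to Z \to 0$ and with multiplication $[X][Y]=[X \otimes Y]$.

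First I would check that $I$ is an ideal. The only point is that $[P][M]=[P \otimes M]$ and $[M][P]=[M \otimes P]$ lie in $I$ whenever $P$ is projective. This is a purely Hopf-algebraic fact: for any module $M$ the diagonal modules $H \otimes M$ and $M \otimes H$ are isomorphic to a direct sum of $\dim_{\mathbb{k}} M$ copies of the free module $H$ (via the comultiplication, carrying $M$ to its underlying trivial module), hence are free. Taking $P$ to be a summand of a free module then exhibits $P \otimes M$ and $M \otimes P$ as summands of free modules, hence projective. Consequently multiplication by any class sends generators of $I$ into $I$, so $I$ is a two-sided ideal.

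Next I would build the comparison maps. Define $\phi \colon K_0(\mathcal{H}) \to K_0(\underline{\mathcal{H}})$ on generators by $[X] \mapsto [X]$. It is well defined because a short exact sequence in the Frobenius category $\mathcal{H}$ induces a distinguished triangle $X \to Y \to Z \to X[1]$ in $\underline{\mathcal{H}}$, so the relation $[Y]=[X]+[Z]$ holds on the target; the tensor relation is preserved since the monoidal structure on $\underline{\mathcal{H}}$ is inherited from $\mathcal{H}$. Thus $\phi$ is a ring homomorphism, and it is surjective since every object of $\underline{\mathcal{H}}$ is an object of $\mathcal{H}$. Because a projective object is a zero object of $\underline{\mathcal{H}}$, we have $\phi(I)=0$, so $\phi$ factors as a surjection $\bar\phi \colon K_0(\mathcal{H})/I \to K_0(\underline{\mathcal{H}})$.

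For the inverse I would define $\psi \colon K_0(\underline{\mathcal{H}}) \to K_0(\mathcal{H})/I$ by $[X] \mapsto [X]+I$, and the hard part is the well-definedness of $\psi$, where the structure of the Frobenius category does the real work in two steps. First, an isomorphism $X \simeq Y$ in $\underline{\mathcal{H}}$ is equivalent to an isomorphism $X \oplus P \simeq Y \oplus Q$ in $\mathcal{H}$ with $P, Q$ projective, so that $[X] \equiv [Y] \pmod{I}$ and $\psi$ is constant on stable isomorphism classes. Second, every distinguished triangle in $\underline{\mathcal{H}}$ is isomorphic, as a triangle, to one arising from a short exact sequence $0 \to X \to Y \to Z \to 0$ in $\mathcal{H}$ (this is the construction of the triangulation in \cite[Section 4--6]{Ke}); combining the two steps, the defining relation $[Y]-[X]=[Z]$ of $K_0(\underline{\mathcal{H}})$ is sent to the image modulo $I$ of the short exact sequence relation, which is zero. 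The tensor relation is again inherited. Hence $\psi$ is a well-defined ring homomorphism, and $\bar\phi$ and $\psi$ are mutually inverse because each is the identity on the classes $[X]$, which generate both rings, giving $K_0(\underline{\mathcal{H}}) \simeq K_0(\mathcal{H})/I$.
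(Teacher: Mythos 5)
Your proposal is correct, and it reaches the result by a somewhat cleaner route than the paper: you construct the two ring homomorphisms $\bar\phi \colon K_0(\mathcal{H})/I \to K_0(\underline{\mathcal{H}})$ and $\psi \colon K_0(\underline{\mathcal{H}}) \to K_0(\mathcal{H})/I$ explicitly and check that they are mutually inverse on generators, whereas the paper works with presentations of both Grothendieck groups (the split Grothendieck groups $G$, $\underline{G}$ modulo the images of the relation groups $R$, $\underline{R}$), compares them in a three-by-three diagram, and invokes the nine lemma to identify the kernel of the induced surjection $K_0(\mathcal{H}) \to K_0(\underline{\mathcal{H}})$ with the subgroup generated by projectives. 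The mathematical content is the same in both arguments and rests on the same two facts: (i) every triangle in $\underline{\mathcal{H}}$ is isomorphic to one coming from a short exact sequence in $\mathcal{H}$ (you cite Keller; the paper cites \cite[Lemma 4.3]{Qi}), and (ii) objects isomorphic in $\underline{\mathcal{H}}$ become isomorphic in $\mathcal{H}$ after adding projective direct summands. The one caveat is that you assert (ii) as a known equivalence, while it is precisely where the paper invests most of its effort: lifting a stable isomorphism to morphisms $s \colon A \to B$, $t \colon B \to A$ with $1_A - ts = vu$ factoring through a projective $P$, observing that $\left( \begin{smallmatrix} s \\ u \end{smallmatrix} \right) \colon A \to B \oplus P$ is a split monomorphism whose cokernel $Q$ is stably trivial and hence projective, so that $A \oplus Q \simeq B \oplus P$. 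Since (ii) is a standard fact about stable categories of Frobenius categories, your citation-level treatment is legitimate, but a self-contained proof would need that argument spelled out. Everything else in your write-up --- the ideal property via the freeness of $H \otimes M$ (the paper's appeal to \cite[Proposition 2.1]{Qi}, with the extra remark, which you omit, that in the graded cases projectivity is detected after forgetting the grading), the surjectivity of $\phi$, and the well-definedness chase for $\psi$ --- matches the paper's reasoning step for step.
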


\begin{proof}
In this proof, all drawn diagrams commute.
Let $G$ and $\underline{G}$ denote the split Grothendieck group of $\mathcal{H}$ and $\underline{\mathcal{H}}$, respectively.
The canonical projection $\mathcal{H} \to \underline{\mathcal{H}}$ gives a group homomorphism 
$$i : G \longrightarrow \underline{G}.$$
Let $R$ denote the abelian group generated by isomorphism classes of short exact sequences in $\mathcal{H}$.
Similarly, let $\underline{R}$ denote the abelian group generated by isomorphism classes of triangles in $\underline{\mathcal{H}}$.
By \cite[Lemma 4.3, with $A = \mathbb{k}$]{Qi}, triangles in $\underline{\mathcal{H}}$ are precisely those which are isomorphic to short exact sequences in $\mathcal{H}$, in other words, there is a surjection 
$$h : R \longrightarrow \underline{R}.$$
Let $f : R \to G$ be the group homomorphism that maps isomorphism classes of short exact sequences $[0 \to X \to Y \to Z \to 0]$ to $[X] - [Y] + [Z]$.
Similarly, let $k : \underline{R} \to \underline{G}$ be the group homomorphism that maps isomorphism classes of triangles $[X \to Y \to Z \to T X]$ to $[X] - [Y] + [Z]$:
\[\xymatrix{
R \ar[r]^f \ar[d]^h & G \ar[d]^i \\
\underline{R} \ar[r]^k & \underline{G} \\
}\]
Let $j : K \to \underline{K}$ be the cokernel of the above square in the horizontal direction:
\[\xymatrix{
R \ar[r]^f \ar[d]^h & G \ar[r]^g \ar[d]^i & K \ar[d]^j \ar[r] & 0\\
\underline{R} \ar[r]^k & \underline{G} \ar[r]^l & \underline{K} \ar[r] & 0 \\
}\]
The map $i : G \to \underline{G}$ is surjective by construction, this implies that $j$ is surjective.

Now replace $R$ and $\underline{R}$ with their respective images under $f$ and $k$, denoted $R'$ and $\underline{R}'$.
Then $i|_{R'} : R' \to \underline{R}'$ is surjective because $h$ was.
This gives two short exact sequences with a surjections as the vertical maps:
\[\xymatrix{
0 \ar[r] & R' \ar[r] \ar[d]^{i|_{R'}} & G \ar[r]^g \ar[d]^i & K \ar[d]^j \ar[r] & 0\\
0 \ar[r] & \underline{R}' \ar[r] & \underline{G} \ar[r]^l & \underline{K} \ar[r] & 0 \\
}\]

Now if we introduce kernels of the vertical maps we get:
\[\xymatrix{
& 0 \ar[d] & 0 \ar[d] & 0 \ar[d] & \\
0 \ar[r] & S \ar[r] \ar[d] & F \ar[r]^{g|_{F}} \ar[d] & I \ar[d] \ar[r] & 0 \\
0 \ar[r] & R' \ar[r] \ar[d]^{i|_{R'}} & G \ar[r]^g \ar[d]^i & K \ar[d]^j \ar[r] & 0\\
0 \ar[r] & \underline{R}' \ar[r] \ar[d] & \underline{G} \ar[r]^l \ar[d] & \underline{K} \ar[r] \ar[d] & 0 \\
& 0 & 0 & 0 & \\
}\]
Since the two bottom rows and all columns are exact, it follows from the nine lemma that also the top row is exact.
In particular, this tells us that if $y \in \operatorname{ker}(j) = I$, then $y = g(x)$ for some $x \in \mathrm{ker}(i) = F$.
But then $x = [A] - [B]$ for some objects $A$ and $B$ of $\mathcal{H}$.
The objects $A$ and $B$ are isomorphic in $\underline{\mathcal{H}}$ because $i([A]) = i([B])$.
This means that there are morphisms
\[\xymatrix{
A \ar@<2pt>[r]^s & \ar@<2pt>[l]^t B 
}\]
in the category $\mathcal{H}$ such that $1_A - ts$ factors trough a projective (or injective) object $P$:
\[\xymatrix@R=22pt{
A \ar[rd]_{1_A - ts} \ar[r]^u & P \ar[d]^v \\
& A \\
}\]
This means that $\begin{smallmatrix}( t & v )\end{smallmatrix} \left( \begin{smallmatrix} s \\ u \end{smallmatrix} \right) = 1_A$:
\[\xymatrix{
A \ar[rd]_{1_A} \ar[r]^(.4){\left( \begin{smallmatrix} s \\ u \end{smallmatrix} \right)} & B \oplus P \ar[d]^{\left( \begin{smallmatrix} t & v \end{smallmatrix} \right)} \\
& A \\
}\]
The morphism $\left( \begin{smallmatrix} s \\ u \end{smallmatrix} \right)$ is monic, we denote its cokernel by $Q$.
We then have the following short exact sequence in $\mathcal{H}$:
which is split because $\left( \begin{smallmatrix} s \\ u \end{smallmatrix} \right)$ had a left inverse.
This means that $B \oplus P$ is isomorphic to $A \oplus Q$ in $\mathcal{H}$.
On the other hand, the short exact sequence descends to a triangle in $\underline{\mathcal{H}}$ and since $\left( \begin{smallmatrix} s \\ u \end{smallmatrix} \right)$ represents an isomorphism there, $Q$ is a zero object in $\underline{\mathcal{H}}$. Hence $1_Q$ factors trough a projective, which implies that $Q$ is projective.
In conclusion, we have shown that $[B \oplus P] = [A \oplus Q]$ in $G$ and $x = [A] - [B] = [P] - [Q]$ in $F$ with $P$ and $Q$ projective. This tells us that $I$, as an abelian group, is generated by projective objects.

It remains to prove that $I$ is an ideal.
In the non-graded case, this follows from the fact that projective objects in $\mathcal{H}$ are summands of (finite-dimensional) free modules, tensor product distributes over direct sum, a tensor product with a free module is always free (see \cite[Proposition 2.1]{Qi}) and consequently, tensor products with projectives are projective.
In the graded case, this follows from the fact that being projective is reflected when forgetting the grading.
\end{proof}

\begin{proposition}\label{verdier_quotient}
Let $\mathcal{T}$ be a triangulated category and $\mathcal{S}$ a thick triangulated subcategory.
Then we have an exact sequence of Grothendieck groups
$$K_0(\mathcal{S}) \longrightarrow K_0(\mathcal{T}) \longrightarrow K_0\left(\mathcal{T}\big/\mathcal{S}\right) \longrightarrow 0$$
where $\mathcal{T} \big/ \mathcal{S}$ denotes Verdier localization (see \cite[Subsection 4.6]{Kr} for a definition) and the first two homomorphisms come from the corresponding inclusion and quotient functors.
If, in addition, $\mathcal{T}$ is a monoidal triangulated category and $\mathcal{S}$ an ideal, then the image of $K_0(\mathcal{S})$ is equal to the ideal given by the kernel of the morphism of Grothendieck rings $K_0(\mathcal{T}) \to K_0\left(\mathcal{T}\big/\mathcal{S}\right)$.
\end{proposition}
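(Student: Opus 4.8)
The plan is to prove the exact sequence first and then read off the ideal statement. Write $Q\colon\mathcal{T}\to\mathcal{T}/\mathcal{S}$ for the quotient functor, which is the identity on objects. Exactness at the right-hand term is immediate: every generator $[X]$ of $K_0(\mathcal{T}/\mathcal{S})$ is $K_0(Q)[X]$, so $K_0(Q)$ is surjective. For the inclusion $\operatorname{im}K_0(\mathcal{S})\subseteq\ker K_0(Q)$ I would note that $Q(S)\simeq 0$ for every $S\in\mathcal{S}$, so the composite $K_0(\mathcal{S})\to K_0(\mathcal{T})\to K_0(\mathcal{T}/\mathcal{S})$ is zero.

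The real content is the reverse inclusion $\ker K_0(Q)\subseteq\operatorname{im}K_0(\mathcal{S})$, and I would prove it by producing an inverse to an induced surjection. Put $\bar{K}:=K_0(\mathcal{T})/\operatorname{im}K_0(\mathcal{S})$; since the composite above vanishes, $K_0(Q)$ factors through a surjection $\psi\colon\bar{K}\to K_0(\mathcal{T}/\mathcal{S})$, and it is enough to build a homomorphism $\phi\colon K_0(\mathcal{T}/\mathcal{S})\to\bar{K}$ with $\phi[X]=[X]$ on objects, for then $\phi$ and $\psi$ are mutually inverse on generators and the desired equality $\ker K_0(Q)=\operatorname{im}K_0(\mathcal{S})$ follows. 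The lemma driving everything is: if $s\colon W\to X$ in $\mathcal{T}$ has cone in $\mathcal{S}$, then $[W]=[X]$ in $\bar{K}$, since the triangle $W\to X\to\operatorname{cone}(s)\to W[1]$ gives $[X]-[W]=[\operatorname{cone}(s)]\in\operatorname{im}K_0(\mathcal{S})$.

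To check that $\phi$ is well defined I would verify the triangle relations of $K_0(\mathcal{T}/\mathcal{S})$. First, isomorphic objects of $\mathcal{T}/\mathcal{S}$ must get equal classes in $\bar{K}$: an isomorphism $X\to Y$ is given by a roof $X\xleftarrow{s}W\xrightarrow{f}Y$ with $\operatorname{cone}(s)\in\mathcal{S}$; since $Q(s)$ is invertible so is $Q(f)$, whence $\operatorname{cone}(f)\in\mathcal{S}$ by thickness of $\mathcal{S}$, and the lemma gives $[X]=[W]=[Y]$. Second, every distinguished triangle of $\mathcal{T}/\mathcal{S}$ is isomorphic to the image of a triangle $X'\to Y'\to Z'\to X'[1]$ of $\mathcal{T}$ (this is the very definition of the triangulation on the Verdier quotient, see \cite[Subsection 4.6]{Kr}), for which $[Y']=[X']+[Z']$ holds already in $K_0(\mathcal{T})$. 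Combining the two points yields $[Y]=[X]+[Z]$ in $\bar{K}$ for an arbitrary distinguished triangle of $\mathcal{T}/\mathcal{S}$, so $\phi$ respects the relations and the first part is done.

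For the second part, when $\mathcal{T}$ is monoidal and $\mathcal{S}$ is a tensor ideal, the Verdier quotient inherits a monoidal structure making $Q$ monoidal, so $K_0(Q)$ is a ring homomorphism and $\ker K_0(Q)$ is an ideal; by the first part it equals $\operatorname{im}K_0(\mathcal{S})$, as claimed. (That $\operatorname{im}K_0(\mathcal{S})$ is an ideal is also visible directly: for $S\in\mathcal{S}$ and any $T\in\mathcal{T}$ one has $S\otimes T\in\mathcal{S}$, hence $[S][T]=[S\otimes T]\in\operatorname{im}K_0(\mathcal{S})$.) I expect the main obstacle to be precisely the well-definedness of $\phi$ in the third paragraph, and within it the step that quotient-isomorphic objects have equal classes in $\bar{K}$; this is exactly where thickness of $\mathcal{S}$ enters, through the characterization of the morphisms inverted by $Q$ as those with cone in $\mathcal{S}$.
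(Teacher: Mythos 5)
Your proposal is correct and takes essentially the same route as the paper: both arguments reduce to the two facts that every distinguished triangle of $\mathcal{T}/\mathcal{S}$ is isomorphic to the image of a triangle from $\mathcal{T}$, and that objects isomorphic in the quotient have classes differing by an element of $\operatorname{im} K_0(\mathcal{S})$ --- the latter being exactly where thickness enters, via the characterization of morphisms inverted (equivalently, objects annihilated) by the quotient functor. The paper packages this as a presentation argument (same generators, strictly more relations, with the extra relations shown to be consequences), while you build the explicit inverse homomorphism $\phi$; these are the same proof in different clothing.
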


\begin{proof}
The categories $\mathcal{T}$ and $\mathcal{T}/\mathcal{S}$ have the same objects and the quotient functor $\mathcal{T} \to \mathcal{T}/\mathcal{S}$ maps triangles to triangles.
This means the groups $K_0(\mathcal{T})$ and $K_0(\mathcal{T}/\mathcal{S})$ have presentations given by the same generators, while the latter group is subject to strictly more relations.
Therefore, it is enough to show that every relation $[X] - [Y] + [Z] = 0$, coming from a triangle $X \to Y \to Z \to \Sigma X$ in $\mathcal{T}/\mathcal{S}$, can be recovered from the relations in $\mathcal{T}$ together with the generators in $\mathcal{S}$ (i.e.\ relations $[S] = 0$ for objects $S$ in $\mathcal{S}$).

Consider an arbitrary triangle in $\mathcal{T}/\mathcal{S}$
\[\xymatrix{
A \ar[r]^a & B \ar[r]^b & C \ar[r]^c & \Sigma A\\
}\]
and choose a representative $s^{-1}f$ of $a$, written in the calculus of left fractions of $\mathcal{T}/\mathcal{S}$.
We extend $f : A \to D$ into a triangle $A \to D \to E \to \Sigma A$ in $\mathcal{T}$ and fill in with a morphism $C \to E$ so that we get an morphism of triangles in $\mathcal{T}/\mathcal{S}$
\[\xymatrix{
A \ar[r]^{s^{-1}f} \ar@{=}[d] & B \ar[r] \ar[d]^s & C \ar[r] \ar[d] & \Sigma A \ar@{=}[d]\\
A \ar[r]^{f} & D \ar[r] & E \ar[r] & \Sigma A \\
}\]
which is, in fact, an isomorphism of triangles.
This shows that any triangle in $\mathcal{T}/\mathcal{S}$ is isomorphic to one coming from $\mathcal{T}$.
Thus it remains to show that objects which are isomorphic in $\mathcal{T}/\mathcal{S}$ are related under our restricted set of relations.

Write $X \leadsto Y$ if there is a triangle $X \to Y \to Z \to \Sigma X$ in $\mathcal{T}$ such that $Z$ lies inside of $\mathcal{S}$.
In particular we have $B \leadsto D$ in the digram above.
Now assume the morphism $s^{-1}f$ is an isomorphism.
Then $C$ is a zero object of $\mathcal{T}/\mathcal{S}$ and so is $E$ by being isomorphic to $C$.
Since $\mathcal{S}$ is thick, it follows (see \cite[Proposition 4.6.2]{Kr}) that $E$ belongs to $\mathcal{S}$ and we have $A \leadsto D$.
In conclusion, the relations $[E] = 0$ and $[A] - [D] + [E] = 0$ imply $[A] = [D]$ and analogously we have $[B] = [D]$, so we get $[A] = [B]$.

The statement about rings follows immediately.
\end{proof}
\pagebreak

\section{The Ring of Cyclotomic Integers}

Let $[n]_q$ denote the polynomial $(q^n - 1)\big/(q - 1) = 1 + q + q^2 + \cdots + q^{n-1}$ and let $\Phi_n(q)$ denote the $n$:th cyclotomic polynomial.
Note that $\Phi_n(q) = [n]_q$ when $n$ is prime.
A polynomial inside a parenthesis will usually denote an ideal, especially when occurring as a denominator with a ring as the numerator.
Whenever a polynomial occurs in a denominator without a parenthesis, we claim it divides the numerator.
In particular $[nm]_q\big/[n]_q = 1 + q^n + q^{2n} + \cdots + q^{(m - 1)n}$.

\begin{proposition}
The Grothendieck ring of the stable category $H_n\!-\!\underline{\mathrm{zmod}}$ (a.k.a.\ the homotopy category of $n$-complexes over vector spaces) is isomorphic to 
$$\Z[x]\big/\big([n]_x\big).$$
\end{proposition}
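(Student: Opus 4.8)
The plan is to reduce the computation to the non-stable category and then carry out an explicit calculation in the category of $\Z$-graded $n$-complexes. By the Proposition identifying $H_n\!-\!\mathrm{zmod}$ with the category of finite-dimensional $\Z$-graded $n$-complexes, together with Proposition \ref{grothendieck_quotient}, we have
$$K_0\big(H_n\!-\!\underline{\mathrm{zmod}}\big) \simeq K_0\big(H_n\!-\!\mathrm{zmod}\big)\big/I,$$
where $I$ is the subgroup generated by the classes of projective objects. So the two subtasks are to compute the ring $K_0(H_n\!-\!\mathrm{zmod})$ and to identify the ideal $I$.

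First I would determine $K_0(H_n\!-\!\mathrm{zmod})$. Since $H_n\!-\!\mathrm{zmod}$ is an abelian category in which every object has finite length, its Grothendieck group is free abelian on the classes of simple objects. A one-dimensional $n$-complex concentrated in a single homological degree is simple, and conversely every simple object is of this form (the top nonzero degree of any complex spans a subobject), so the simples are precisely the shifts $\mathbb{k}\{j\}$ of the monoidal unit $\mathbb{k} = \mathbb{k}\{0\}$. Setting $x := [\mathbb{k}\{-1\}]$, the grading shift becomes multiplication by an invertible element, and since $\mathbb{k}\{i\} \otimes \mathbb{k}\{j\} \simeq \mathbb{k}\{i+j\}$ the tensor product is additive in the degree; hence $K_0(H_n\!-\!\mathrm{zmod}) \simeq \Z[x, x^{-1}]$ as a ring, with unit $[\mathbb{k}]$.

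Next I would record the classes of the indecomposables. For $1 \le \ell \le n$ let $E_\ell$ denote the indecomposable $n$-complex that is one-dimensional in the $\ell$ consecutive homological degrees $0, 1, \dots, \ell-1$ with identity differentials. Truncating off the bottom degree yields a short exact sequence $0 \to E_{\ell-1}\{-1\} \to E_\ell \to \mathbb{k} \to 0$, and iterating gives in $K_0$ the relation $[E_\ell] = 1 + x + \cdots + x^{\ell-1} = [\ell]_x$. It then remains to identify $I$: because $H_n$ is Frobenius (Khovanov's theorem) the projective and injective objects coincide, and the structure theory of $n$-complexes (equivalently, the serial structure of the Taft algebra $H_n$, whose indecomposable projectives are uniserial of length $n$) shows that the indecomposable projectives are exactly the full-length complexes $E_n\{j\}$, $j \in \Z$. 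Their classes are $x^{-j}[n]_x$, so the subgroup they generate is precisely the ideal $([n]_x) \subseteq \Z[x, x^{-1}]$. Finally, from the factorization $x^n - 1 = (x-1)[n]_x$ one gets $x^n = 1$ in the quotient, so $x$ is already invertible modulo $[n]_x$ and the localization collapses, giving $\Z[x,x^{-1}]\big/([n]_x) \simeq \Z[x]\big/([n]_x)$.

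The main obstacle I expect is the identification of $I$. One must pin down exactly which objects are projective and verify that the subgroup they generate is the full ideal $([n]_x)$ rather than some proper subgroup; this is where the serial structure of $H_n$, forcing every indecomposable projective to be a full-length (hence contractible) $n$-complex of class a unit multiple of $[n]_x$, does the real work. By contrast, the computation of $K_0(H_n\!-\!\mathrm{zmod})$ and the passage to $\Z[x]/([n]_x)$ are routine once the simple and indecomposable objects are classified.
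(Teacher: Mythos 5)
Your proposal is correct and follows essentially the same route as the paper: reduce to the non-stable category via Proposition \ref{grothendieck_quotient}, identify $K_0(H_n\!-\!\mathrm{zmod})\simeq\Z[x,x^{-1}]$, observe that the classes of projectives generate the ideal $([n]_x)$, and eliminate $x^{-1}$ using $x^n=1$ modulo $[n]_x$. The paper's own proof is a three-line version that simply asserts the first two of these facts; your computation of the simples, the relations $[E_\ell]=[\ell]_x$, and the identification of the indecomposable projectives $E_n\{j\}$ supplies exactly the details it leaves implicit.
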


\begin{proof}
$K_0(H_n\!-\!\mathrm{zmod}) \simeq \Z[x, x^{-1}]$.
Then $K_0(H_n\!-\!\underline{\mathrm{zmod}}) \simeq \Z[x, x^{-1}]\big/([n]_x)$ by Proposition \ref{grothendieck_quotient}.
But 
$$\Z[x, x^{-1}]\big/\big([n]_x\big) \simeq \Z[x]\big/\big([n]_x\big)$$
because $0 = (x - 1)[n]_x = x^n - 1$, hence $x^{-1} = x^{n-1}$.
\end{proof}

\begin{proposition}\label{grothendieck_zmod}
The Grothendieck ring of the stable category $H_n \! \otimes \! H_m \!-\!\underline{\mathrm{zmod}}$ is isomorphic to
$$\Z[x, x^{-1}, y, y^{-1}]\big/\big([n]_x[m]_y\big).$$
\end{proposition}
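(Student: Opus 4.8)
The plan is to mimic the two-step strategy used for $H_n\!-\!\underline{\mathrm{zmod}}$ in the preceding proposition: first compute the Grothendieck ring of the non-stable category $H_n \! \otimes \! H_m \!-\!\mathrm{zmod}$, and then pass to the stable category by quotienting out the ideal generated by the classes of projective objects via Proposition \ref{grothendieck_quotient}. The key observation is that taking the total-degree-$(\overline 0, \overline 0)$ part turns $H_n \! \otimes \! H_m \!-\!\mathrm{zmod}$ into the abelian category of finite-dimensional $\Z \! \times \! \Z$-graded modules over the local ring $\mathbb{k}[d_0, d_1]\big/(d_0^n, d_1^m)$, where the commuting differentials $d_0, d_1$ have homological bidegree $(1,0)$ and $(0,1)$; this is merely the bigraded upgrade of the identification of $H_n\!-\!\mathrm{zmod}$ with $N$-complexes established earlier.

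First I would show that $K_0(H_n \! \otimes \! H_m \!-\!\mathrm{zmod}) \simeq \Z[x, x^{-1}, y, y^{-1}]$. Since every object is finite-dimensional it has a finite composition series, so the Grothendieck group (built from all short exact sequences, exactly as the group $K$ in the proof of Proposition \ref{grothendieck_quotient}) is free abelian on the classes of the simple objects. The simple objects are precisely the one-dimensional modules $\mathbb{k}$ concentrated in a single bidegree $(i,j)$, on which $d_0$ and $d_1$ act by zero; assigning the monomial $x^i y^j$ to such a simple gives the identification of abelian groups. The monoidal structure is inherited from $H_n \! \otimes \! H_m \!-\!\mathrm{gmod}$ and is additive in both homological gradings, so the tensor product of the simple in bidegree $(i,j)$ with the one in bidegree $(i',j')$ is the simple in bidegree $(i+i', j+j')$, matching $x^i y^j \cdot x^{i'} y^{j'}$; hence the ring structure is that of the Laurent polynomial ring.

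Next I would identify the ideal $I$ generated by the projective objects. Because $\mathbb{k}[d_0,d_1]\big/(d_0^n,d_1^m)$ is local, with unique maximal ideal $(d_0,d_1)$, its only indecomposable graded projective is the free module itself, so every projective is a finite direct sum of shifts of the free module $H_n \! \otimes \! H_m$. Reading off its composition factors, one finds a simple subquotient $\mathbb{k}$ in each bidegree $(i,j)$ with $0 \le i \le n-1$ and $0 \le j \le m-1$, so its class in the Grothendieck ring is
$$\sum_{i=0}^{n-1}\sum_{j=0}^{m-1} x^i y^j = [n]_x[m]_y.$$
Its shifts have class a unit times $[n]_x[m]_y$, so together they generate the principal ideal $\big([n]_x[m]_y\big)$.

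Finally, Proposition \ref{grothendieck_quotient} yields $K_0(H_n \! \otimes \! H_m \!-\!\underline{\mathrm{zmod}}) \simeq K_0(H_n \! \otimes \! H_m \!-\!\mathrm{zmod})\big/I$, which by the two computations above is exactly $\Z[x,x^{-1},y,y^{-1}]\big/\big([n]_x[m]_y\big)$. I expect the only point genuinely requiring care to be the identification of the projectives: one must confirm that in the bigraded setting the projective (equivalently, by the Frobenius property, injective) objects killed in the stable category are precisely the summands of shifts of the free module, which hinges on the locality of the tensor-product ring and on projectivity being detected after forgetting the grading. Everything else is a direct bigraded analogue of the single-variable argument.
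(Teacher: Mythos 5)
Your proposal is correct and follows essentially the same route as the paper: the paper's (very terse) proof likewise takes $K_0$ of the non-stable category to be $\Z[x,x^{-1},y,y^{-1}]$, identifies the classes of projective objects as $x^a y^b [n]_x[m]_y$, and quotients by them via Proposition \ref{grothendieck_quotient}. Your write-up simply supplies the details (dévissage to simples for the Laurent-ring identification, and graded locality of $\mathbb{k}[d_0,d_1]\big/(d_0^n,d_1^m)$ to see that projectives are sums of shifts of the free module) that the paper leaves implicit.
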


\begin{proof}
The projective objects correspond to $x^a y^b [n]_x[m]_y$.
The statement follows from Proposition \ref{grothendieck_quotient}.
\end{proof}
Unlike the previous case, the variable $x^{-1}$ cannot be expressed in terms of $x$, $y$ and $y^{-1}$ (and analogously for the variable $y^{-1}$).
If that where possible, then any ring homomorphism from $\Z[x, y, y^{-1}]\big/([n]_x[m]_y)$ would map $x$ to an invertible element.
But the ring homomorphism to $\Z\big/(m)$, where $x \mapsto 0$, $y \mapsto 1$, provides a counterexample.

\begin{theorem}[Main result]
Assume $n$ and $m$ are distinct odd prime numbers.
Then we have an isomorphism of rings
$$K_0\left(\mathcal{T}\big/\mathcal{S}\right) \simeq \Z[q]\big/\big(\Phi_{nm}(q)\big),$$
where $\mathcal{T} = H_n \! \otimes \! H_m \!-\!\underline{\mathrm{zmod}}$ (see \ref{definition_zmod}), $\mathcal{S}$ is the thick ideal given in Corollary \ref{thick_ideal}, $\mathcal{T} \big/ \mathcal{S}$ is a Verdier localization (see \cite[Subsection 4.6]{Kr} for a definition) and $K_0$ denotes Grothendieck ring.
\end{theorem}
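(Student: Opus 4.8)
The plan is to combine the Verdier-localization formula of Proposition \ref{verdier_quotient} with the explicit Grothendieck ring of Proposition \ref{grothendieck_zmod}, and then to finish with a purely ring-theoretic identification. By Proposition \ref{verdier_quotient}, since $\mathcal{S}$ is a thick ideal, $K_0(\mathcal{T}/\mathcal{S}) \cong K_0(\mathcal{T})/J$, where $J$ is the image of $K_0(\mathcal{S})$ inside $K_0(\mathcal{T}) \cong \Z[x,x^{-1},y,y^{-1}]\big/([n]_x[m]_y)$. So the whole argument reduces to identifying the ideal $J$ and recognizing the resulting quotient.

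First I would compute $J$. Because the ideals $\ker\underline{P_0}$ and $\ker\underline{P_1}$ are orthogonal (Theorem \ref{orthogonal}) and every object of their sum splits as a direct sum of one object from each (as in the proof of Corollary \ref{thick_ideal}), one gets $J = J_0 + J_1$, where $J_i$ is the image of $K_0(\ker\underline{P_i})$. I claim $J_0 = ([m]_y)$ and, symmetrically, $J_1 = ([n]_x)$. For $J_0 \subseteq ([m]_y)$ I would first compute the target Grothendieck ring: since $G_n$ is semisimple, an object of $G_n \!\otimes\! H_m\!-\!\mathrm{zmod}$ is projective exactly when it is projective (free) over $H_m$, so $K_0\big(G_n \!\otimes\! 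H_m\!-\!\underline{\mathrm{zmod}}\big) \cong \Z[x,x^{-1},y,y^{-1}]\big/([m]_y)$, and $K_0(\underline{P_0})$ is the evident quotient map, whose kernel is $([m]_y)$; every class in $J_0$ dies under $K_0(\underline{P_0})$, hence lies in $([m]_y)$. For the reverse inclusion I would exhibit explicit objects of $\ker\underline{P_0}$: any object free in the $y$-direction (i.e.\ free over $H_m$) becomes projective after applying $P_0$ and so lies in the kernel, and tensoring a free $H_m$-module with an arbitrary object realizes every multiple $\alpha[m]_y$, giving $([m]_y) \subseteq J_0$.

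Having identified $J = ([n]_x) + ([m]_y)$, the quotient becomes
$$K_0(\mathcal{T}/\mathcal{S}) \cong \Z[x,x^{-1},y,y^{-1}]\big/\big([n]_x,\, [m]_y\big),$$
using $([n]_x[m]_y) \subseteq ([n]_x,[m]_y)$. Since $n$ and $m$ are prime, $[n]_x = \Phi_n(x)$ and $[m]_y = \Phi_m(y)$; moreover the relations force $x^n = 1$ and $y^m = 1$, so $x$ and $y$ are already invertible and the ring is simply $\Z[x,y]\big/(\Phi_n(x),\Phi_m(y)) \cong \Z[\zeta_n]\otimes_\Z \Z[\zeta_m]$. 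To finish I would define a ring homomorphism $\Z[q]\big/(\Phi_{nm}(q)) \to \Z[x,y]\big/(\Phi_n(x),\Phi_m(y))$ by $q \mapsto xy$. It is well defined because this ring is $\Z$-free, hence embeds into $\mathbb{Q}(\zeta_n)\otimes_\mathbb{Q}\mathbb{Q}(\zeta_m) \cong \mathbb{Q}(\zeta_{nm})$ (the degrees multiply, as $\phi(n)\phi(m)=\phi(nm)$), where $xy$ maps to a primitive $nm$-th root of unity, so that $\Phi_{nm}(xy)=0$. Using a Bézout relation $\alpha m + \beta n = 1$ together with $x^n=y^m=1$ one recovers $x = q^{\alpha m}$ and $y = q^{\beta n}$ in the image, which proves surjectivity; and since both rings are free $\Z$-modules of rank $\phi(nm)=(n-1)(m-1)$, a surjection between them is an isomorphism.

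The main obstacle I expect is the precise determination of $J$ in the second step — in particular pinning down $K_0\big(G_n \!\otimes\! H_m\!-\!\underline{\mathrm{zmod}}\big)$ together with the map induced by $\underline{P_0}$, and verifying that enough concrete acyclic (free) objects live in each kernel to generate $([m]_y)$ and $([n]_x)$ exactly. Once $J$ is known, the concluding ring isomorphism is routine: it relies only on $n,m$ being prime (so that $[n]_x,[m]_y$ are the relevant cyclotomic polynomials), on $\gcd(n,m)=1$ (for surjectivity), and on matching $\Z$-ranks.
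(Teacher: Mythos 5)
Your proof is correct, and its first half coincides with the paper's: both reduce, via Propositions \ref{grothendieck_zmod} and \ref{verdier_quotient}, to computing $K_0(\mathcal{T})/J$ where $J$ is the image of $K_0(\mathcal{S})$, and both identify $J = ([n]_x,[m]_y)$ --- though you actually prove this identification, by computing $K_0\big(G_n\!\otimes\!H_m\!-\!\underline{\mathrm{zmod}}\big) \simeq \Z[x,x^{-1},y,y^{-1}]\big/([m]_y)$ and sandwiching $J_0$ between $\ker K_0(\underline{P_0})$ and the subgroup generated by classes of shifts of $\mathbb{k}\otimes H_m$, whereas the paper asserts it in a single sentence. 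The second half is where you genuinely diverge. The paper stays inside elementary polynomial algebra: it transports the ideal along the group-ring isomorphism $\Z[x,y]/(x^n-1,y^m-1)\simeq\Z[q]/(q^{nm}-1)$, under which $[n]_x\mapsto [nm]_q/[m]_q$ and $[m]_y\mapsto [nm]_q/[n]_q$, and then proves that the transported ideal is principal with generator $\Phi_{nm}(q)$, using the factorization $[nm]_q=[n]_q[m]_q\Phi_{nm}(q)$ for one inclusion and an explicit $q$-analogue of the B\'ezout identity (from $1=an-bm$ one gets $1=[an]_q-q[bm]_q$) for the other. You instead recognize the quotient as $\Z[\zeta_n]\otimes_\Z\Z[\zeta_m]$ and construct the isomorphism directly by $q\mapsto xy$, checking well-definedness through the embedding into $\mathbb{Q}(\zeta_n)\otimes_{\mathbb{Q}}\mathbb{Q}(\zeta_m)\simeq\mathbb{Q}(\zeta_{nm})$, surjectivity by B\'ezout, and injectivity by comparing $\Z$-ranks. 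The paper's route buys self-containedness: it never invokes the irreducibility of $\Phi_{nm}$ over $\mathbb{Q}$, which is hidden in your appeals to $[\mathbb{Q}(\zeta_{nm}):\mathbb{Q}]=\phi(nm)$ and to the rank count, and it exhibits an explicit polynomial generator of the ideal. Your route buys conceptual clarity: it identifies the Grothendieck ring structurally as $\Z[\zeta_n]\otimes_\Z\Z[\zeta_m]$ and pinpoints which class realizes the primitive $nm$:th root of unity, namely $xy$, the class of the one-dimensional object in homological bidegree $(1,1)$ --- the categorical incarnation of $\zeta_{nm}=\zeta_n\zeta_m$.
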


\begin{proof}
We prove this in four steps.

{\bf The first step} shows that
$$K_0\left(\mathcal{T}\big/\mathcal{S}\right) \simeq \frac{\Z[x, x^{-1}, y, y^{-1}]\big/([n]_x[m]_y)}{([n]_x, [m]_y)}$$
where $([n]_x, [m]_y)$ denotes the ideal generated by $[n]_x$ and $[m]_y$.
By Proposition \ref{grothendieck_zmod}, we have $K_0(\mathcal{T}) = \Z[x, x^{-1}, y, y^{-1}]\big/([n]_x[m]_y)$.
Now $\mathcal{S}$ is the direct sum of two ideals, each of which has an image under the inclusion into $K_0(\mathcal{T})$ given by $([n]_x)$ or $([m]_y)$.
That is $\operatorname{im}(K_0(\mathcal{S}) \to K_0(\mathcal{T}))$ is equal to the ideal $([n]_x, [m]_y)$.
The statement then follows from Lemma \ref{verdier_quotient}.

{\bf The second step} simplifies the expression by writing
$$\frac{\Z[x, x^{-1}, y, y^{-1}]\big/([n]_x[m]_y)}{([n]_x, [m]_y)} \simeq
\Z[x, y]\big/([n]_x, [m]_y).$$
This is because the principal ideal $([n]_x[m]_y)$ is contained in the ideal $([n]_x, [m]_y)$.
The variables $x^{-1}$ and $y^{-1}$ can be omitted because $x^n = y^m = 1$.

{\bf The third step} establishes an isomorphism of rings
$$\Z[x, y]\big/([n]_x, [m]_y) \simeq
\Z[q]\big/([nm]_q/[m]_q, [nm]_q/[n]_q).$$
To prove this, let $C_i$ denote the cyclic group with $i$ elements.
Choose an isomorphism of groups $C_n \times C_m \simeq C_{nm}$.
This provides an isomorphism of group rings:
$$\Z[C_n \times C_m] \simeq \Z[C_{nm}].$$
This isomorphism can be expressed as follows:
$$\Z[x, y]\big/(x^n - 1, y^m - 1) \simeq
\Z[q]\big/(q^{nm} - 1)$$
where the variables $x$, $y$ and $q$ correspond to generators of the groups $C_n$, $C_m$ and $C_{nm}$ respectively.
Now under the above isomorphism, the polynomial $[n]_x$ corresponds to $[nm]_q/[m]_q$ and analogously, $[m]_y$ corresponds to $[nm]_q/[n]_q$.
Thus we have an isomorphism
$$\frac{\Z[x, y]\big/(x^n - 1, y^m - 1)}{([n]_x, [m]_y)} \simeq
\frac{\Z[q]\big/(q^{nm} - 1)}{\left([nm]_q/[m]_q, [nm]_q/[n]_q\right)}.$$
And since the polynomials $[n]_x$, $[m]_y$ and $[nm]_q$ are divisors of $x^n - 1$, $y^m - 1$ and $q^{nm} - 1$ respectively, the latter relations become redundant.
This proves the third step.

{\bf The last step} shows that our ideal is principal, that is an isomorphism
$$\Z[q]\big/\left([nm]_q/[m]_q, [nm]_q/[n]_q\right) \simeq \Z[q]\big/(\Phi_{nm}(q)).$$
Since $n$ and $m$ where distinct primes, we have $[nm]_q = [n]_q[m]_q\Phi_{nm}(q)$ which shows that the ideal $(\Phi_{nm}(q))$ contains both the polynomial $[nm]_q/[m]_q$ and $[nm]_q/[n]_q$.
To show the other direction, choose positive integers $a$ and $b$ such that 
\begin{align*}
1 &= an - bm,\\
1 &= [an]_q - q[bm]_q,\\
1 &= \frac{[an]_q}{[n]_q}[n]_q - q\frac{[bm]_q}{[m]_q}[m]_q,\\
\Phi_{nm}(q) &= \frac{[an]_q}{[n]_q} \frac{[nm]_q}{[m]_q} - q\frac{[bm]_q}{[m]_q} \frac{[nm]_q}{[n]_q}.
\end{align*}
\end{proof}

\section*{Acknowledgements}
I am deeply grateful to my advisor Volodymyr Mazorchuk for invaluable support.
I am also thankful to Martin Herschend who taught me about Frobenius categories.


\begin{thebibliography}{xxxxx}

\bibitem[Ab]{Ab}
\textsc{V.~Abramov.} On a graded $q$-differential algebra.
J. Nonlin. Math. Phys. {\bf 13} (2006), sup1, 1--8.

\bibitem[BHN]{BHN}
\textsc{P.~Bahiraei, R.~Hafezi, A.~Nematbakhsh.} Homotopy category of $N$-complexes of projective modules.
Preprint arXiv:1504.01043, 2015.

\bibitem[Bi]{Bi}
\textsc{J.~Bichon.} $N$-complexes et alg\`{e}bres de Hopf.
Comptes Rendus Math., {\bf 337} (2003), no. 7, 441--444.

\bibitem[CR]{CR}
\textsc{J.~Chuang, R.~Rouquier.} Derived equivalences for 
symmetric groups and $\mathfrak{sl}_2$-ca\-te\-go\-ri\-fi\-ca\-tion.
Ann. of Math. (2) {\bf 167} (2008), no. 1, 245--298.

\bibitem[Cr]{Cr}
\textsc{L.~Crane.} Clock and category: is quantum gravity algebraic?
J. Math. Phys. {\bf 36} (1995), no. 11, 6180--6193. 

\bibitem[CF]{CF}
\textsc{L.~Crane, I.~Frenkel.} Four-dimensional topological quantum field theory, Hopf categories, and the canonical bases.
Topology and physics. J. Math. Phys. {\bf 35} (1994), no. 10, 5136--5154. 

\bibitem[CSW]{CSW}
\textsc{C.~Cibils, A.~Solotar, R.~Wisbauer} $N$-complexes as functors, amplitude cohomology and fusion rules. Comm. in Math. Phys. {\bf 272} (2007), no. 3, 837--849.

\bibitem[D1]{D1}
\textsc{M.~Dubois-Violette} Tensor Product of $N$-complexes and Generalization of Graded Differential Algebras. Bulg. J. Phys. {\bf 36} (2009), no. 3, 227--236.

\bibitem[D2]{D2}
\textsc{M.~Dubois-Violette} $d^N = 0$. arXiv:q-alg/9710021, 1997.

\bibitem[D3]{D3}
\textsc{M.~Dubois-Violette} Generalized differential spaces with $d^N = 0$ and the $q$-differential calculus. Czechoslovak J. Phys. {\bf 46} (1996), no. 12, 1227--1233.

\bibitem[DH]{DH}
\textsc{M.~Dubois-Violette, M.~Henneaux} Tensor fields of mixed Young symmetry type and $N$-complexes.
Commun. Math. Phys. {\bf 226} (2002), 393--418.

\bibitem[DK]{DK}
\textsc{M.~Dubois-Violette, R.~Kerner} Universal $q$-differential calculus and $q$-analog of homological algebra.
Acta Math. Univ. Comenian {\bf 65} (1996), 75--188.

\bibitem[Es]{Es}
\textsc{S.~Estrada} Monomial Algebras over Infinite Quivers. Applications to $N$-Complexes of Modules.
Communications in Algebra {\bf 35} (2007), no. 10, 3214--3225.

\bibitem[He]{He}
\textsc{M.~Henneaux} $N$-Complexes and Higher Spin Gauge Fields.
Int. J. Geom. Meth. Mod. Phys. {\bf 5} (2008), no. 8, 1255--1263.

\bibitem[IKM]{IKM}
\textsc{O.~Iyama, K.~Kato, J.~Miyachi} Derived categories of $N$-complexes.
Preprint arXiv:1309.6039, 2013.

\bibitem[Ka]{Ka}
\textsc{M.~M.~Kapranov.} On the $q$\textendash analog of homological algebra.
Preprint arXiv:q-alg/9611005, 1991.

\bibitem[Ke]{Ke}
\textsc{B.~Keller.} Derived categories and their uses.
Handbook of algebra {\bf 1}, North-Holland, Amsterdam, 1996.

\bibitem[Kh1]{Kh1}
\textsc{M.~Khovanov.} A categorification of the Jones polynomial.
Duke Math. J. {\bf 101} (2000), no. 3, 359--426. 

\bibitem[Kh2]{Kh2}
\textsc{M.~Khovanov.} Hopfological algebra and categorification at a root of unity: the first steps.  arXiv:math/0509083, 2005.

\bibitem[KL]{KL}
\textsc{M.~Khovanov, A.~Lauda.} A categorification of 
quantum $\mathfrak{sl}_n$. Quantum Topol. {\bf 1} (2010), 1--92.

\bibitem[KQ]{KQ}
\textsc{M.~Khovanov, Y.~Qi.} An approach to categorification of some small quantum groups.
Quantum Topol. {\bf 6} (2015), no. 2, 185--311.

\bibitem[Kr]{Kr}
\textsc{H.~Krause.} Localization theory for triangulated categories. 
Triangulated categories, 161--235, London Math. Soc. Lecture Note Ser., 
{\bf 375}, Cambridge Univ. Press, Cambridge, 2010. 

\bibitem[KW]{KW}
\textsc{C.~Kassel, M.~Wambst} Alg{\`e}bre homologique des $N$-complexes et homologie de Hochschild aux racines de l'unit{\'e}. Publ. Res. Inst. Math. Sci. {\bf 34} (1998), no. 2, 91--114.

\bibitem[Ma]{Ma}
\textsc{V.~Mazorchuk.} Lectures on algebraic categorification. QGM Master Class Series. 
European Mathematical Society (EMS), Z{\"u}rich, 2012.

\bibitem[MS]{MS}
\textsc{V.~Mazorchuk, C.~Stroppel.} Categorification of (induced) cell modules and the rough structure of generalised Verma modules.
Adv. Math. {\bf 219} (2008), no. 4, 1363--1426. 

\bibitem[M1]{M1}
\textsc{Dj.~Mirmohades.} Simplicial Structure on Complexes. 
Preprint arXiv:1404.0628, 2014.

\bibitem[M2]{M2}
\textsc{Dj.~Mirmohades.} Homologically optimal categories of sequences lead to $N$-complexes.
Preprint arXiv:1405.3921, 2014.

\bibitem[Pa]{Pa} \textsc{B.~Pareigis.} A non-commutative non-cocommutative Hopf algebra in ``nature''.
J. Algebra {\bf 70} (1981), no. 2, 356--374.

\bibitem[Qi]{Qi}
\textsc{Y.~Qi.} Hopfological Algebra. Compos. Math. {\bf 150} (2014), no. 1, 1--45.

\bibitem[Ro]{Ro}
\textsc{R.~Rouquier.} 2-Kac-Moody algebras. Preprint arXiv:0812.5023. 

\bibitem[Sp]{Sp}
\textsc{E.~H.~Spanier.} The Mayer Homology Theory, Bull. Amer. Math. Soc. {\bf 55} (1949), no 2, 102--112.

\bibitem[Ta]{Ta}
\textsc{A.~Tikaradze} Homological constructions on $N$-complexes.
J. Pure Appl. Algebra {\bf 176} (2002), no. 2--3, 213--222.

\bibitem[Wa]{Wa}
\textsc{M.~Wambst} Homologie cyclique et homologie simpliciale aux racines de l'unit{\'e}.
K-Theory {\bf 23} (2001), no. 4, 377--397.
\end{thebibliography}
\end{document}